\documentclass{amsart}
\usepackage{amssymb,latexsym, amsmath, amscd}
\usepackage{tikz-cd}
\usepackage{enumerate}
\usepackage{multirow}
\usepackage{fullpage, booktabs}
\usepackage[all]{xy}
\usepackage{graphicx}
\usepackage{caption}
\makeatletter
\@namedef{subjclassname@2010}{%
\textup{2010} Mathematics Subject Classification}
\makeatother

\newtheorem{theorem}{Theorem}

\newtheorem{lemma}[theorem]{Lemma}
\newtheorem{proposition}[theorem]{Proposition}

\theoremstyle{definition}

\numberwithin{equation}{section}

\def\Z{\mathbb Z}

\def\C{\mathbb C}
\def\CP{\mathbb CP}
\def\sgn{\mathrm{sgn}}

\def\expb{\overline{\exp}}
\newcommand{\RP}{\mathbb RP}
\newcommand{\Q}{\mathbb{Q}}
\newcommand{\R}{\mathbb{R}}

\begin{document}

\title{Finite subset spaces of spheres}

\author{Jacob Mostovoy}


\maketitle

\begin{abstract} 
We find the complete rational homology for the finite subset spaces of a $d$-dimensional sphere. We also determine  the integral homology in top $d$ degrees and obtain a partial description of it in codimension $d$. 
\end{abstract}



\section{Introduction}
For a topological space $X$, the $n$th finite subset space  $\exp_{n}X$ is the quotient space of the Cartesian product $X^n$ under the equivalence relation 
$$(x_1,\ldots, x_n)\sim (y_1,\ldots, y_n)\Leftrightarrow\{x_1,\ldots, x_n\}= \{y_1,\ldots, y_n\}.$$

We have $\exp_1X=X$ and $\exp_2X$ coincides with the second symmetric product of $X$. The space $\exp_{n}X$ is naturally a subspace of $\exp_{n+1}X$. For an $m$-connected cell complex $X$, the spaces $\exp_{n}X$ are $(m+n-2)$ connected \cite{MS}; their colimit as $n\to\infty$, denoted by  $\exp X$, is, therefore, contractible whenever $X$ is connected\footnote{The contractibility of $\exp X$ is, in fact, a more basic result than the $(m+n-2)$-connectedness of  $\exp_{n}X$.}. It is the colimit $\exp X$ that has greatest importance in applications to other parts of mathematics (see \cite{BD}); however, the spaces $\exp_n X$ for finite $n$ have also been studied, with the case $X=S^1$ and $n=3$ attracting most attention \cite{Adamaszek}.

The homotopy type of $\exp_n X$ for $n>2$ is completely known only for one-dimensional complexes $X$. In particular, $\exp_3S^1$ is homeomorphic to a 3-sphere (Bott, \cite{Bott}) and Tuffley \cite{T0} shows that $\exp_n S^1$ is homotopy equivalent to $S^n$ for $n$ odd and to  $S^{n-1}$ for $n$ even. For $X$ of dimension two, Tuffley \cite{T3} computes the cohomology of $\exp_3X$ and the top two homology groups for any closed oriented surface $X$. When $X=S^2$, he shows that 
$$H_k(\exp_nS^2)=\begin{cases}\Z,\qquad\qquad\quad \qquad\text{for\ } k=2n\\ 0, \, \quad \qquad\qquad\qquad\text{for\ } k=2n-1\\ \Z\oplus \Z/(n-1)\Z\ \ \ \ \text{for\ } k=2n-2,\end{cases}$$
and that the rational homology is zero in all other dimensions. Taamallah in \cite{Taamallah} computes the integral homology of $\exp_4S^2$ and shows that 
$\exp_4 S^d$ is rationally homotopy equivalent to $S^{4d}\vee S^{3d}$ for $d$ even and $S^{2d+1}$ for $d$ odd.

In this note, we explore what kind of information can be obtained about the finite subset spaces of spheres
from known facts about the cohomology of unordered configuration spaces of points in $\R^d$.

Over the rationals we have the following:
\begin{theorem} \label{one}
The space $\exp_n S^d$ has the same rational homology as $S^{nd}\vee S^{(n-1)d}$ for $d$ even, and as $S^{[\frac{n+1}{2}](d+1)-1}$ 
for $d>1$ odd. 
\end{theorem}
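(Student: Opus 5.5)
We compute the rational homology through the filtration
$$S^d=\exp_1S^d\subset\exp_2S^d\subset\cdots\subset\exp_nS^d .$$
The stratum $\exp_kS^d\setminus\exp_{k-1}S^d$ is the unordered configuration space $B_k(S^d)$ of $k$ distinct points in $S^d$, an open $kd$-manifold. So $H_*(\exp_kS^d,\exp_{k-1}S^d;\Q)$ is the compactly supported (Borel–Moore) homology of $B_k(S^d)$. By Poincaré duality this equals $H^{kd-*}(B_k(S^d);\Q^{w})$, where $\Q^w$ is the orientation local system. A transposition of two points acts on the orientation by $(-1)^d$. Hence $\Q^w$ is trivial for $d$ even and is the sign local system for $d$ odd. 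Writing $F_k(S^d)$ for the ordered configuration space, we have $H^*(B_k(S^d);\Q^w)=H^*(F_k(S^d);\Q)^{\Sigma_k}$ for $d$ even and $H^*(F_k(S^d);\Q)^{\mathrm{sgn}}$ for $d$ odd.

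\textbf{Step 1: the inputs.} I would determine these groups from the known $\Sigma_k$-module structure of $H^*(F_k(\R^d);\Q)$ (Arnold, Cohen), using the fibration $F_{k-1}(\R^d)\to F_k(S^d)\to S^d$. The isotypic parts needed are standard.
\begin{itemize}
\item For $d$ even, $H^*(B_k(S^d);\Q)$ is expected to be that of $S^{2d-1}$ for $k\ge 3$. It is $\Q$ in degrees $0,d$ for $k=2$, and $\Q$ in degrees $0,d$ for $k=1$.
\item For $d$ odd, the sign part is expected to vanish except in a single degree. It is concentrated in degree $(k-1)(d-1)/2+\dots$ roughly, and only for small $k$ relative to parity.
\end{itemize}
I would check these claims against Tuffley's $S^2$ computation and Taamallah's $\exp_4$ computation.

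\textbf{Step 2: the spectral sequence.} These inputs give the $E^1$ page of the spectral sequence of the filtration. Its $k$th column is $H^{BM}_*(B_k(S^d);\Q)$, placed in total degrees between $kd-(2d-1)$ and $kd$. For $d$ even, column $k\ge3$ contributes $\Q$ in degrees $kd$ and $(k-2)d+1$, and the first two columns give the homology of $\exp_2S^d$.

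\textbf{Step 3: the differentials.} This is the main obstacle. I would argue by induction on $n$, using two facts:
\begin{itemize}
\item $\exp S^d=\operatorname{colim}\exp_nS^d$ is contractible, so everything except $H_0$ must eventually die;
\item $\exp_nS^d$ is $(d+n-2)$-connected, which kills low-degree classes and forces specific $d^1$ differentials.
\end{itemize}
Concretely, for $d$ even the class of degree $(k-2)d+1$ from column $k$ must cancel the top class $(k-1)d$ of column $k-1$ only when the degrees match, which happens when $d=2$. In general the classes must cancel against each other in a pattern that leaves exactly the top class $nd$ (the fundamental class of $\exp_n$, a rational homology manifold away from the diagonals) and one class in degree $(n-1)d$.

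If the connectivity argument does not suffice, I would try first to identify $d^1$ with the boundary map in Borel–Moore homology. This map is induced by adjacency of the strata $B_k$ and $B_{k-1}$, i.e.\ points colliding. It can be computed locally via the "doubling" map $B_{k-1}(S^d)\times S^{d-1}$ near the diagonal, and reduces to multiplication by a sign-dependent integer, zero or $\pm2$.

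For $d$ odd the same bookkeeping with the sign-twisted inputs should leave a single class. Its degree is $[\frac{n+1}{2}](d+1)-1$, with the parity of $n$ determining which column survives. I would confirm the answer with an Euler characteristic check, and against Bott's and Tuffley's $d=1$ results as a consistency test, even though $d=1$ is excluded from the statement.
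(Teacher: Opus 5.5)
Your framework (filter by cardinality, identify the columns with Borel--Moore homology of $C_k(S^d)$, and let contractibility of $\exp S^d$ force the cancellations) can be made to work. As written, though, the proposal puts all the content into Steps~1 and~3, and both are stated incorrectly.

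The inputs in Step~1 are only ``expected'', and the values you give are wrong.
For $d$ even, $F_2(S^d)$ retracts equivariantly onto antipodal pairs, so $C_2(S^d)\simeq\RP^d$ is rationally a point. Your extra class in degree $d$ would produce a second class in $H_d(\exp_2S^d;\Q)$, and it would give $\chi_c(C_2(S^d))=2$ rather than $\binom{2}{2}=1$.
For $d$ odd, $H^*(C_k(S^d);\Q^w)$ vanishes for even $k\ge2$. For every odd $k$ it is $\Q$ in \emph{two} degrees, $\frac{(k-1)(d-1)}{2}$ and $\frac{(k-1)(d-1)}{2}+d$.
Moreover, the fibration $F_{k-1}(\R^d)\to F_k(S^d)\to S^d$ you plan to use singles out one point and is not $\Sigma_k$-equivariant, so it does not give these isotypic parts. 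You would need, e.g., the equivariant Gysin sequence of $F_k(\R^d)\subset F_k(S^d)$, together with a computation of the residue of the invariant (resp.\ alternating) class of $H^*(F_k(\R^d);\Q)$ along the strata $\{x_i=*\}$.

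Step~3 misidentifies the mechanism. For $d\ge2$ every $d^1$ vanishes for degree reasons, so the local collision computation you propose is beside the point. The low class of column $k$ never pairs with column $k-1$; the degree match ``when $d=2$'' does not happen.
With the correct inputs, the only possibly nonzero differentials are $d^2$'s from the low class of column $k$ to the top class of column $k-2$. The degrees are $(k-2)d+1\to(k-2)d$ for $d$ even and $\frac{k-1}{2}(d+1)\to\frac{k-1}{2}(d+1)-1$ for $d$ odd. Contractibility of $\exp S^d$ forces all of these to be isomorphisms. In $\exp_nS^d$, exactly the top classes that would be killed from columns $n+1$ and $n+2$ survive, which is the theorem.
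Your connectivity bound is also off by one: $\exp_nS^d$ is only $(d+n-3)$-connected, e.g.\ $H_d(\exp_2S^d)\cong\Z$.

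The paper sidesteps the hard input by using the basepoint. It splits $\exp_nS^d$ via the cofibration $\exp_n(S^d,*)\to\exp_nS^d\to\expb_nS^d$. Both outer spaces are filtered with quotients $C_k(\R^d)^+$, whose rational (twisted) cohomology is the classical sparse Arnold--Cohen answer. The $E^1$ pages then collapse at $E^2$ for shape reasons. Contractibility of the monoids $\exp(S^d,*)$ and $\expb\,S^d$ leaves at most one surviving entry in each, in the top column.
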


As for the integer coefficients, we relate the top homology of $\exp_n S^d$ to the cohomology of the symmetric group:
\begin{theorem}\label{two}
For each $r< d$  there is an isomorphism
$$H_{nd-r}(\exp_n S^d)= H^r(S_n,\Z)$$
if $d$ is even. If $d$ is odd,
$$H_{nd-r}(\exp_n S^d)=H^r(S_n,\sgn),$$
unless $r=d-1$ {and} $n=2,3$, in which case
$$H_{nd-r}(\exp_n S^d)=H^r(S_n,\sgn)\oplus\Z.$$
Here $\sgn$ is the sign representation of $S_n$. 
\end{theorem}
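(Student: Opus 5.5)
We will work with the pair $(\exp_n S^d,\exp_{n-1}S^d)$. The complement $\exp_nS^d\setminus\exp_{n-1}S^d$ is the unordered configuration space $B_n(S^d)$ of $n$ distinct points in $S^d$, an open manifold of dimension $nd$, and $\exp_nS^d/\exp_{n-1}S^d$ is its one-point compactification. By Poincaré--Lefschetz duality,
$$H_{nd-r}(\exp_nS^d,\exp_{n-1}S^d)\cong \tilde H_{nd-r}(B_n(S^d)^+)\cong H^r(B_n(S^d),\mathcal O),$$
where $\mathcal O$ is the orientation local system of $B_n(S^d)$. A transposition of two points acts on the orientation of $(\R^d)^n$ by $(-1)^d$. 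So $\mathcal O$ is trivial for $d$ even and is the sign system $\sgn$ for $d$ odd. This is exactly the twist dichotomy in the statement.

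Next we reduce from $S^d$ to $\R^d$ and then to $S_n$. The ordered configuration space $F_n(\R^d)$ is $(d-2)$-connected; its first cohomology sits in degree $d-1$, generated by the classes $\omega_{ij}$. Hence the Cartan--Leray spectral sequence for the covering $F_n(\R^d)\to B_n(\R^d)$ gives $H^r(B_n(\R^d),M)\cong H^r(S_n,M)$ for $r<d-1$. In degree $r=d-1$ there is an extra contribution $H^0(S_n,H^{d-1}(F_n(\R^d))\otimes M)$, namely the invariants (or sign-twisted invariants) of the span of the $\omega_{ij}$. Since $\omega_{ji}=(-1)^d\omega_{ij}$, the transposition $(ij)$ sends $\omega_{ij}$ to $(-1)^d\omega_{ij}$. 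For $d$ even with trivial coefficients, the sum $\sum\omega_{ij}$ is invariant, giving one extra class. For $d$ odd with sign coefficients, one checks that the twisted invariants vanish except in the small cases $n=2,3$, where the Arnold relation $\omega_{12}\omega_{23}+\dots$ does not interfere and an invariant survives. To pass from $\R^d$ to $S^d$, we use the fibration $B_{n-1}(\R^d)\to B_n(S^d)\to S^d$ (for ordered spaces, $F_n(S^d)\to S^d$ with fiber $F_{n-1}(\R^d)$), or the known splitting of $F_n(S^d)$ rationally/integrally in low degrees. This part needs care: it tracks how the extra degree-$(d-1)$ classes interact with the base $S^d$.

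We then feed this into the long exact sequence of the pair. By the cited connectivity result, $\exp_{n-1}S^d$ has dimension $(n-1)d$, so $H_k(\exp_{n-1}S^d)=0$ for $k>(n-1)d$. For $r<d$ we have $nd-r>(n-1)d$, hence $H_{nd-r}(\exp_nS^d)\cong H_{nd-r}(\exp_nS^d,\exp_{n-1}S^d)$, except at the boundary degree $nd-r-1=(n-1)d$, i.e. $r=d-1$. There the connecting map to $H_{(n-1)d}(\exp_{n-1}S^d)$, whose rank is known from Theorem~\ref{one}, must be analysed. Combining everything, for $r<d-1$ the answer is $H^r(S_n,\Z)$ or $H^r(S_n,\sgn)$ directly. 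In degree $d-1$, the extra summand coming from $H^{d-1}(F_n)$ must either be killed by the boundary map to the top class of $\exp_{n-1}S^d$ or survive.

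\textbf{Main obstacle.} The main obstacle is degree $r=d-1$: identifying the extra invariant class and computing the connecting homomorphism. For $d$ even, I would try to show that the extra class maps isomorphically onto $H_{(n-1)d}(\exp_{n-1}S^d)\cong\Z$, the fundamental class of the top stratum, so that it cancels. For $d$ odd, $H_{(n-1)d}(\exp_{n-1}S^d)$ is rationally zero, and I would verify directly for $n=2,3$ that a free $\Z$ survives; Theorem~\ref{one} gives the consistency check $[\frac{n+1}{2}](d+1)-1=nd-(d-1)$ exactly when $n=2,3$. For $n\ge4$ with $d$ odd, the twisted invariants vanish, so no extra summand appears.
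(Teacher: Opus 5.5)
\textbf{Route and status.} Your route differs from the paper's. You filter by $\exp_{n-1}S^d\subset\exp_nS^d$, so the stratum you must understand is $C_n(S^d)$. The paper instead removes a basepoint and works with $\exp_n(S^d,*)$, $\expb_nS^d$ and Lemma~\ref{quo}, so that only $C_k(\R^d)$ occur and Theorem~\ref{confsym} applies as stated.

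For $r<d-1$ your argument is correct and more direct than the paper's. You only need to note that it is $F_n(S^d)$, not $F_n(\R^d)$, that must be $(d-2)$-connected; this follows from the fibration $F_{n-1}(\R^d)\to F_n(S^d)\to S^d$.

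The genuine gap is $r=d-1$. There the passage from $\R^d$ to $S^d$ is the whole content, you leave it open, and the outcome you predict is wrong in both parities.

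\textbf{The case $d$ even.} The invariant $\sum\omega_{ij}$ does not survive on $S^d$. In the Serre spectral sequence of the fibration above, every $\omega_{ij}$ transgresses to the same element, namely $\pm2$ times a generator of $H^d(S^d)$. To see this, pull back along $(x_1,\dots,x_n)\mapsto(x_1,x_i,x_j)$ to $F_3(S^d)\simeq V_2(\R^{d+1})$, where $H^{d-1}=0$ and $H^d=\Z/2$. Consequently $H^{d-1}(F_n(S^d))^{S_{n-1}}=0$, and $H^{d-1}(C_n(S^d);\Z)\cong H^{d-1}(S_n;\Z)$ is torsion. So the connecting map into $H_{(n-1)d}(\exp_{n-1}S^d)=\Z$ is zero, which finishes the even case in your framework.

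The isomorphism you aim for cannot exist. It would make the class $\alpha$ of $\exp_{n-1}S^d$ vanish in $H_{(n-1)d}(\exp_nS^d)$. With Proposition~\ref{alphabeta} this would make $\beta$ torsion, whereas $\beta$ has infinite order.

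\textbf{The case $d$ odd, and a defect in the statement.} For $n=2$, $H_{(n-1)d}(\exp_{n-1}S^d)=H_d(S^d)=\Z$ is not rationally zero. Your consistency check is also miscomputed: for $n=2$, $[\frac32](d+1)-1=d\neq d+1=nd-(d-1)$, so it holds only for $n=3$.

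In fact the step of verifying that a free $\Z$ survives for $n=2$ cannot succeed, because the statement is false there. We have $\exp_2S^d=SP^2S^d$, and $SP^2S^d/S^d\simeq\Sigma^{d+1}\RP^{d-1}$ gives $H_{d+1}(\exp_2S^d)=0$, not $H^{d-1}(S_2,\sgn)\oplus\Z=\Z$. This already contradicts Theorem~\ref{one} rationally. Your own framework detects it: $F_2(S^d)\simeq S^d$ has no $H^{d-1}$, so $H^{d-1}(C_2(S^d);\mathcal O)=H^{d-1}(S_2;\sgn)=0$.

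The paper's proof slips at the same point. It uses $H_{nd-d}(C_{n-1}(\R^d)^+)=0$ and $H_{nd-d}(\exp_n(S^d,*))=0$ for $d$ odd. Both are false for $n=2$, where these spaces are $S^d$, and the extra $\Z$ of Theorem~\ref{confsym} is then killed by the connecting map.

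For $n=3$ the extra $\Z$ is genuine. For $n\ge4$ you still need the $S_n$-module structure of $H^{d-1}(F_n(S^d))$. Since the transgression vanishes for $d$ odd, this group equals $H^{d-1}(F_{n-1}(\R^d))$ only as an $S_{n-1}$-module. So \cite[Proposition~9]{AG} handles $n\ge5$, while $n=4$ needs a further argument, e.g.\ a rank count using Theorem~\ref{one}.
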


The cohomology of the symmetric group appears here since it is related by the Cartan-Leray spectral sequence to the cohomology of the unordered configuration space $C_n(\R^d)$ of $n$ points on $\R^d$. In codimension $d$, this relation is less straightforward, so we relate the homology of $\exp_n S^d$ to the cohomology of $C_n(\R^d)$, with coefficients in the orientation local system if $d$ is odd:

\begin{theorem}\label{twoa}
Let  $n\geq 3$. When $d>1$ is odd, 
$$H_{nd-d}(\exp_n S^d)\subseteq H^d(C_n(\R^d),\Z_\pm)$$ is a subgroup of index at most 4. When $d$ is even,
$H_{nd-d}(\exp_n S^d)$ is an extension of $H^d(C_n(\R^d),\Z)$ by $\Z\oplus \Z/(n-1)\Z$. 
\end{theorem}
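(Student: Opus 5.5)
Our strategy is to compare $\exp_n S^d$ with $\exp_{n-1}S^d$ through the pair $(\exp_n S^d,\exp_{n-1}S^d)$. The complement $\exp_n S^d\setminus\exp_{n-1}S^d$ is the unordered configuration space $C_n(S^d)$, an open $nd$-manifold. Since $\exp_nS^d$ is compact and $\exp_{n-1}S^d$ is a closed subcomplex, Poincar\'e--Lefschetz duality gives
$$H_{k}(\exp_n S^d,\exp_{n-1}S^d)\cong H^{nd-k}(C_n(S^d),\mathcal O),$$
where $\mathcal O$ is the orientation sheaf of $C_n(S^d)$. This sheaf is trivial for $d$ even and is $\Z_\pm$ (the sign local system) for $d$ odd. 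We then split off the point at infinity: $C_n(S^d)$ fibres over $S^d$ with fibre $C_{n-1}(\R^d)$-type data. More precisely, we use the standard comparison between $C_n(S^d)$ and $C_n(\R^d)$ in low cohomological degrees: in degrees $<d$ both agree with $H^*(S_n)$ (this was the input for Theorem \ref{two}). In degree $d$, however, the cohomology of $C_n(S^d)$ differs from that of $C_n(\R^d)$ by a bounded correction coming from the $S^d$ factor.

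The key steps, in order, are as follows.

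\emph{Step 1.} Write the long exact sequence of the pair in degrees $nd-d$ and $nd-d\pm1$:
$$H_{nd-d+1}(\exp_nS^d,\exp_{n-1}S^d)\to H_{nd-d}(\exp_{n-1}S^d)\to H_{nd-d}(\exp_nS^d)\to H^{d}(C_n(S^d),\mathcal O)\to H_{nd-d-1}(\exp_{n-1}S^d).$$
For $n-1\ge2$ the group $H_{nd-d}(\exp_{n-1}S^d)$ is the top homology of $\exp_{n-1}S^d$. It is $\Z$ for $d$ even and $0$ for $d$ odd (orientability), except that $\exp_2 S^d$ needs separate care. Likewise $H_{nd-d-1}(\exp_{n-1}S^d)$ is identified via Theorem \ref{two} with $r=1$: it is $H^1(S_{n-1};\Z)=0$, respectively $H^1(S_{n-1},\sgn)=\Z/2$.

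\emph{Step 2.} Compute $H^d(C_n(S^d),\mathcal O)$ in terms of $H^d(C_n(\R^d),\mathcal O)$. We use the $SO(d+1)$-action: $C_n(S^d)\simeq SO(d+1)\times_{SO(d)}C_{n-1}(\R^d)$-type fibration, or more simply the Gysin/Serre sequence of the fibration $C_{n-1}(\R^d)\to F_n(S^d)/S_{n-1}\to S^d$. Comparing this with the inclusion $C_n(\R^d)\subset C_n(S^d)$ shows that the restriction map in degree $d$ has kernel and cokernel built from $H^0$ and $H^d$ of $S^d$ with coefficients in $H^0(S_n)$. For $d$ even this yields an extension producing the extra $\Z\oplus\Z/(n-1)$. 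Here the $\Z/(n-1)$ should arise as the Euler class of $S^d$ ($\chi=2$) acting on a transfer of index $n-1$, consistent with Tuffley's $n$-independent pattern for $S^2$. For $d$ odd the sign twist kills these invariant classes, leaving only a $\Z/2$-sized discrepancy.

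\emph{Step 3.} Assemble the pieces. For $d$ odd, the injection $H_{nd-d}(\exp_nS^d)\hookrightarrow H^d(C_n(S^d),\Z_\pm)$ holds because the preceding term is $0$. Its cokernel maps into $\Z/2$, and the comparison of Step 2 contributes at most another factor $2$; together these give index at most $4$. For $d$ even, the image of $\Z=H_{nd-d}(\exp_{n-1}S^d)$ together with the Step 2 correction gives the extension by $\Z\oplus\Z/(n-1)$. The case $n=3$ uses the explicit structure of $\exp_2S^d=SP^2S^d$.

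The main obstacle is Step 2, the precise degree-$d$ comparison between $C_n(S^d)$ and $C_n(\R^d)$ with twisted coefficients, and in particular producing the torsion $\Z/(n-1)$ exactly. I would first try the Serre spectral sequence of $F_n(S^d)\to S^d$ (forgetting all but one point), taken $S_{n-1}$-equivariantly and then transferred to $S_n$. The $d_{d}$-differential on the $E_2^{0,d-1}$ row is given by the Euler class, and evaluating this differential is the crux.
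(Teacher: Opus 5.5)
\textbf{Overview.} Your pair $(\exp_nS^d,\exp_{n-1}S^d)$ is the paper's filtration taken in the opposite order. The paper first divides out $\exp_n(S^d,*)$ and then analyses $\expb_nS^d$ through the cofibration (\ref{unorcof2}). You first divide out $\exp_{n-1}S^d$, and must then pass from $C_n(S^d)$ to $C_n(\R^d)$. That passage, your Step~2, is never carried out.

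\textbf{The right tool for Step~2, and the odd case.} The natural tool is not a Serre spectral sequence plus a transfer from $S_{n-1}$ to $S_n$, which only controls things up to $n$-torsion. It is the Gysin sequence of the closed codimension-$d$ submanifold $\{s\ni\infty\}\cong C_{n-1}(\R^d)$ of $C_n(S^d)$. Equivalently, use the cofibration $C_{n-1}(\R^d)^+\to C_n(S^d)^+\to C_n(\R^d)^+$ together with Theorem~\ref{duality}. For $d$ odd this gives what you want:
\begin{itemize}
\item the coefficients on $C_{n-1}$ are again $\Z_\pm$;
\item $H^0(C_{n-1},\Z_\pm)=0$ for $n\ge 3$;
\item $H^1(C_{n-1},\Z_\pm)=H^1(S_{n-1},\sgn)=\Z/2$.
\end{itemize}
So $H^d(C_n(S^d),\Z_\pm)\to H^d(C_n(\R^d),\Z_\pm)$ is \emph{injective} of index at most $2$, and with Step~1 you get index at most $4$. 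The odd case is therefore a valid reordering of the paper's two index-$2$ steps, once Step~2 is actually proved. Note that you need injectivity here, not merely a ``bounded discrepancy''.

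\textbf{The gap in the even case.} For $d$ even the Gysin boundary $H^{d-1}(C_n(\R^d))\to H^0(C_{n-1}(\R^d))=\Z$ is multiplication by $2(n-1)$, not $n-1$, on the free part. On the linking sphere of the stratum, the far point passes once directly above and once directly below each of the $n-1$ finite points; this is where the $2$ you attribute to $\chi(S^d)$ enters. Compare $H^2(C_2(S^2))=H^2(\RP^2)=\Z/2$ and $H_1(C_n(S^2))=B_n(S^2)^{ab}=\Z/2(n-1)$. Consequently:
\begin{itemize}
\item Step~2 gives $0\to\Z/2(n-1)\to H^d(C_n(S^d))\to H^d(C_n(\R^d))\to 0$, with $\Z/2(n-1)$ generated by the image of $\beta=[\exp_n(S^d,*)]$.
\item Step~1 gives $0\to\Z\alpha\to H_{nd-d}(\exp_nS^d)\to H^d(C_n(S^d))\to 0$, with $\alpha=[\exp_{n-1}S^d]$. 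Injectivity on $\Z\alpha$ needs $H^{d-1}(C_n(S^d))$ to be torsion, which again comes from the Gysin sequence.
\end{itemize}
Hence the kernel $K$ of $H_{nd-d}(\exp_nS^d)\to H^d(C_n(\R^d))$ is generated by $\alpha,\beta$ subject to a single relation $2(n-1)\beta=a\alpha$. This gives $K\cong\Z\oplus\Z/\gcd(a,2(n-1))$, and nothing in your argument determines $a$. A priori $K$ could be $\Z$, or $\Z\oplus\Z/2(n-1)$, and so on. Your Step~3 claim that the pieces ``give the extension by $\Z\oplus\Z/(n-1)$'' is therefore unsupported. Your heuristic that $\Z/(n-1)$ comes out of Step~2 is off by exactly this factor $2$.

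\textbf{Missing ingredient.} What you need is the paper's relation $(n-1)(\alpha-2\beta)=0$ (Proposition~\ref{alphabeta}). It is proved by a bordism of marked configurations in which two marked points lie one above the other along the suspension coordinate of $S^d$. It forces $a=\pm(n-1)$, and hence $K\cong\Z\oplus\Z/(n-1)$. Alternatively, the paper shows the collision boundary map in (\ref{unorcof2}) is multiplication by $n-1$. That gives $K/\Z\beta\cong\Z/(n-1)$, which together with $K/\Z\alpha\cong\Z/2(n-1)$ also forces $|a|=n-1$.
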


While these results do not determine $H_{nd-d}(\exp_n S^d)$ completely, for $d=2$ we recover the results of \cite{T3}. Tuffley's approach uses an explicit cell decomposition for $\exp_n S^2$ that generalizes the Fox-Neuwirth cell decomposition for the 1-point compactification of the configuration space $C_n(\R^2)$. Such cell decompositions can also be constructed for $d>2$ both for the 1-point compactifications of $C_n(\R^d)$ and for $\exp_n S^d$. However, their combinatorics are quite intricate even in the more basic case of the 1-point compactifications of $C_n(\R^d)$; see \cite{GS} for a discussion. Another kind of cell decompositions for $\exp_n S^d$ are provided by the simplicial machinery \cite{KS, MS}; they also seem to be of limited use in our case.


\section{Three types of finite subset spaces}

\subsection{Subset spaces for based polyhedra}
Let $X$ be a connected polyhedron with the basepoint $*$ and $Y=X-\{*\}$. 
Let $\exp_n(X,*)\subset \exp_n X$ be the subspace which consists of the subsets $s$ with $*\in s$. 
If we write\footnote{This notation conflicts somewhat with that of \cite{KS}.} $$\expb_n X=\exp_n X/\exp_n(X,*),$$
there is a cofibration
\begin{equation}\label{thecofibration}
\exp_n(X,*)\to\exp_n X\to\expb_n X.
\end{equation}
We have 
$$\exp_n X \subset \exp_{n+1} X,$$
$$\exp_n (X,*) \subset \exp_{n+1} (X,*)$$ and  $$\expb_n X \subset \expb_{n+1} X.$$ 
The successive quotients in the second and third of these filtrations actually coincide:
\begin{lemma}\label{quo}
For each $n$ 
$$\exp_{n+1}(X,*)/\exp_{n}(X,*)=\expb_n X / \expb_{n-1} X = C_n(Y)^+,$$
the 1-point compactification of the unordered configuration space of $n$ points on $Y$.
\end{lemma}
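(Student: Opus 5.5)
Both quotients will be identified with $C_n(Y)^+$ through explicit continuous bijections from a compact space onto a Hausdorff one. The general principle: if $A\subset Z$ is closed in a compact Hausdorff space $Z$, then $Z/A$ is homeomorphic to the one-point compactification $(Z-A)^+$. The sets $Z-A$ involved are locally compact Hausdorff, being open subsets of compact Hausdorff spaces. Since $X$ is a compact polyhedron, all the spaces $\exp_k X$ are compact Hausdorff; they are continuous images of $X^k$ and are Hausdorff with the Vietoris (Hausdorff-metric) topology, which agrees with the quotient topology by compactness.

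\emph{The second quotient.} The space $\expb_n X/\expb_{n-1}X$ is $\exp_n X$ with the closed subset $\exp_n(X,*)\cup\exp_{n-1}X$ collapsed to a point. The complement of this subset consists of the subsets $s\subset X$ with exactly $n$ elements, none of which is $*$, i.e.\ $s$ is an $n$-point subset of $Y$. We must check that the topology on this complement, as a subspace of $\exp_n X$, agrees with that of $C_n(Y)=(Y^n-\Delta)/S_n$. This holds because near a configuration of $n$ distinct points the quotient map $X^n\to\exp_n X$ restricts to a covering $X^n-\Delta\to \exp_nX-\exp_{n-1}X$. That map is an open $S_n$-quotient there: small neighbourhoods of distinct points stay disjoint, so nearby $n$-element subsets are obtained by moving each point slightly. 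The general principle then gives the homeomorphism with $C_n(Y)^+$.

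\emph{The first quotient.} In $\exp_{n+1}(X,*)$, collapse the closed subset $\exp_n(X,*)$. A set containing $*$ lies outside $\exp_n(X,*)$ exactly when it has $n+1$ elements, so the complement consists of the sets $\{*\}\cup t$ with $t$ an $n$-element subset of $Y$. The map $t\mapsto\{*\}\cup t$ is a continuous bijection from $C_n(Y)$ onto this complement. It is a homeomorphism, by the same local argument: near such a set, the $n+1$ points are distinct, so the nearby sets containing $*$ correspond to small independent motions of the $n$ points of $t$. The general principle again gives $C_n(Y)^+$. To get a global map, compose $\exp_n X\to\exp_{n+1}(X,*)$, $s\mapsto s\cup\{*\}$, with the quotient map. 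This continuous map sends $\exp_n(X,*)\cup\exp_{n-1}X$ into $\exp_n(X,*)$, so it descends to a continuous bijection
$$\expb_nX/\expb_{n-1}X\to \exp_{n+1}(X,*)/\exp_n(X,*)$$
between compact Hausdorff spaces, hence a homeomorphism.

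The main obstacle is the point-set verification that the quotient topologies restrict to the standard topology on $C_n(Y)$, and that the quotients are Hausdorff. The plan handles Hausdorffness via the Hausdorff metric on closed subsets of $X$: a compact space modulo a closed subspace is compact Hausdorff. The identification of the topology is handled by the local product structure near configurations of distinct points. Everything else is bookkeeping of which subsets get collapsed.
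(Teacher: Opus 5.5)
Your proof is correct, and it is the argument the paper leaves implicit: the lemma is stated without proof because in each quotient the complement of the collapsed closed subspace is a copy of $C_n(Y)$, and collapsing a closed subspace of a compact Hausdorff space gives the one-point compactification of the complement. You are right to assume $X$ compact; that is the correct reading of ``polyhedron'' here, and it is genuinely needed, since for $X=\R$, $*=0$, $n=1$ the first quotient is $\R$ rather than $(\R-\{0\})^+$.
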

These filtrations give rise to homology spectral sequences which prove to be useful in the case of the homology with rational coefficients and $X=S^d$. In the case of integer coefficients, it will be sufficient for our purposes to consider the cofibration
\begin{equation}\label{unorcof}C_{n-1}(Y)^+ \to \expb_n S^d/ \expb_{n-2} S^d \to C_{n}(Y)^+,\end{equation}
which is obtained from (\ref{thecofibration}) by factoring out the subsets with fewer than $n-1$ points.

\subsection{Finite subset spaces with unlimited number of points}
Set $\exp X = \cup_n \exp_n X,$ $\exp (X,*) = \cup_n \exp_n (X,*)$ {and} 
$\expb\, X = \cup_n\, \expb_n X.$
The following is, essentially, well-known:
\begin{lemma}\label{contra}
The spaces  $\exp X$, $\exp (X,*)$ and  $\expb\, X$ are contractible.
\end{lemma}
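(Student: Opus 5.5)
The plan is to use the standard union map and Whitehead's theorem. For $\exp X$, consider the map
$$\mu\colon \exp X\times\exp X\to\exp X,\qquad (s,t)\mapsto s\cup t,$$
which sends $\exp_n X\times \exp_m X$ into $\exp_{n+m}X$. Since $X$ is a polyhedron, each $\exp_n X$ is a finite CW complex (via the simplicial machinery of \cite{KS, MS}), and $\exp X$ carries the colimit topology, so it is a CW complex as well. It therefore suffices to show that all homotopy groups of $\exp X$ vanish, and by compactness any map $f\colon S^k\to \exp X$ lands in some $\exp_n X$.

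To kill such an $f$, pick the basepoint $*$ and use connectedness of $X$ to choose a path $\gamma$ from $*$ to an arbitrary point. The cleanest route is the following chain of homotopies.
\begin{enumerate}[(1)]
\item $f\simeq \mu(f,*)$. Take a path $x_t$ from some $x_0$ to $*$ and note that $s\mapsto s\cup\{x_t\}$ gives a homotopy from $s\cup\{x_0\}$ to $s\cup\{*\}$. Rather than requiring $x_0\in s$ for every $s$, I would first argue that the inclusion $i\colon\exp X\to\exp X$, $s\mapsto s$, is homotopic to $s\mapsto s\cup\{*\}$. To get this, use that $\exp X$ is path connected, that the composite $s\mapsto s\cup s=s$ is the identity, and that $\mu$ is associative, commutative and idempotent.
\item The standard trick: a homotopy-associative, idempotent H-space structure with unit up to homotopy forces $\pi_k$ to vanish. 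In $\pi_k(\exp X)$ the operation induced by $\mu$ agrees with the group addition (Eckmann–Hilton), since $\{*\}$ is a homotopy unit. Here $s\mapsto s\cup\{*\}$ is homotopic to the identity once we pick a path in each component, and this works because every $s$ is path connected to $\{*\}$ through such unions. Idempotency then gives $[f]=\mu_*([f],[f])=2[f]$, so $[f]=0$.
\end{enumerate}
This disposes of $\exp X$.

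The same argument applies verbatim to $\exp(X,*)$. Here $\mu$ restricts to a map $\exp(X,*)\times\exp(X,*)\to\exp(X,*)$, and $\{*\}$ is a strict unit, so $\exp(X,*)$ is a strictly unital, idempotent, commutative topological monoid. Eckmann–Hilton gives $\mu_*(a,b)=a+b$ on $\pi_k$, and idempotency gives $a=2a$, hence $a=0$; connectedness is clear since every $s$ joins $\{*\}$ by moving its other points to $*$ along paths. By Whitehead, $\exp(X,*)$ is contractible. Finally, $\expb X=\exp X/\exp(X,*)$ is the cofibre of the inclusion of a CW subcomplex, since the filtration pieces are compatible. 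Both spaces are contractible, so the inclusion is a homotopy equivalence and its cofibre is contractible.

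The main obstacle is the unit issue for $\exp X$, where $\{*\}$ is not a unit for $\mu$. I would sidestep it by applying the cofibre argument differently. First prove $\exp(X,*)$ contractible via the strict monoid as above. Then show that the inclusion $\exp(X,*)\hookrightarrow\exp X$ is a homotopy equivalence: the map $r(s)=s\cup\{*\}$ lands in $\exp(X,*)$, and $r\circ i=\mathrm{id}$. For $i\circ r\simeq \mathrm{id}$ on each compact piece, I would use $\mu(f,*)$ together with the idempotent trick, $[f]=[f\cup f]$, applied to the non-unital monoid $\exp X$, for which $\pi_k$ with the $\mu$-operation is still an idempotent abelian semigroup with $\pi_k$-action.

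If this step resists, the fallback is to cite the known contractibility of $\exp X$ from \cite{MS} or \cite{Handel}-style arguments, as the footnote in the introduction suggests. Contractibility of $\expb X$ then follows from the cofibration (\ref{thecofibration}) in the colimit.
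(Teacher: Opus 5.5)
There is a genuine gap, and it is in $\exp X$. Your argument for $\exp(X,*)$ (strict unit $\{*\}$, Eckmann--Hilton, idempotency, Whitehead) is correct and matches the paper.

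For $\exp X$, step (2) asserts that $r(s)=s\cup\{*\}$ is homotopic to $\mathrm{id}_{\exp X}$, because every $s$ can be joined to $\{*\}$ through such unions. Pointwise paths do not assemble into a homotopy. You would have to choose, continuously in $s$, a point of $s$ together with a path from it to $*$, and this is impossible in general. Moreover, $r$ factors through $\exp(X,*)$, which you have just shown is contractible. So $r\simeq\mathrm{id}$ is equivalent to the contractibility of $\exp X$, and the argument is circular. Your ``sidestep'' $i\circ r\simeq\mathrm{id}$ is the same statement again. The sentence about an ``idempotent abelian semigroup with $\pi_k$-action'' never states an identity that forces $[f]=0$, and citing the literature is not a proof. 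Since your deduction for $\expb X$ assumes $\exp X$ is contractible, that part fails as well.

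You can supply the missing idea in either of two ways.

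\emph{Directly.} You do not need a unit, only that $\mu$ is a based map at $\{*\}$. For $a,b\in\pi_k(\exp X,\{*\})$,
$$\mu_*(a,b)=\mu_*(a,0)+\mu_*(0,b)=r_*a+r_*b,$$
so idempotency gives $a=\mu_*(a,a)=2r_*a$. Applying $r_*$ and using $r\circ r=r$ gives $r_*a=2r_*a$. Hence $r_*a=0$, and then $a=0$. For $k=1$, read this multiplicatively.

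\emph{The paper's route.} The paper never runs the monoid argument on $\exp X$. It runs it on $\exp(X,*)$ and on $\expb X$. Union descends to $\expb X$, because a set containing $*$ still contains $*$ after any union, and the collapsed point is then absorbing. Hence $\mu_*(a,0)=\mu_*(0,a)=0$, and $a=\mu_*(a,a)=0$. Contractibility of $\exp X$ then follows from the cofibration (\ref{thecofibration}): $\exp(X,*)$ is a contractible subcomplex, so $\exp X\simeq \exp X/\exp(X,*)=\expb X$. In other words, the paper makes the deductions in the opposite order to yours, with $\exp X$ obtained last rather than first.
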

\begin{proof}
Indeed, the spaces $\exp (X,*)$ and  $\expb\, X$ are connected topological monoids with the operation being the union of subsets. The sum in their homotopy groups is induced by the monoid operation and therefore, is idempotent. Since the only group with the idempotent group operation is trivial, all the homotopy groups of $\exp (X,*)$ and  $\expb\, X$ vanish. Since these spaces are CW-complexes (see, for instance, \cite{MS}), they are contractible. It follows that the same is true for $\exp X$.
\end{proof}

\subsection{The case of $X$ of dimension $d$}

If $X$ has dimension $d$, the spaces $\exp_n X$ and $\expb_n X$ both have dimension $nd$ while $\exp_n(X,*)$ is of dimension $nd-d$. It follows from the long exact sequence of the cofibration (\ref{thecofibration})
that $\exp_n X$ and $\expb_n X$ have the same homology in dimensions greater than $nd-d+1$.

More can be said when $X$ is a manifold (\cite{Handel, KS}). The quotient map 
$$X^{n-1}\to \exp_{n}(X,*)$$
$$(x_1,\ldots, x_{n-1})\mapsto \{x_1,\ldots, x_{n-1}, *\}$$
is a covering ramified over a subspace of codimension $d$. The monodromy group of this covering is $S_{n-1}$ that acts by interchanging the ``coordinates'' in $X^{n-1}$. If $X$ is oriented, this action on $X^{n-1}$ preserves the orientation if and only if the dimension $d$ of $X$ is even; when $d$ is odd, only even permutations preserve the orientation. As a consequence, the space $\exp_{n}(X,*)$ is a manifold away from a subcomplex of codimension $d$; it is orientable whenever $X$ is orientable and $d$ is even. It follows that for even $d$ we have $$H_{(n-1)d}(\exp_{n}(X,*))=\Z$$ and for $d>1$ odd $H_{(n-1)d}(\exp_{n}(X,*))$ vanishes. The same is true about the top-dimensional homology of  $\exp_n X$ and $\expb_n X$, so we can speak of the fundamental classes of these spaces whenever $X$ is oriented.

In fact, the condition that  $X$ is a manifold is unnecessarily strong. In order to conclude that $\exp_{n}(X,*)$ has a fundamental class for $d$ even and vanishing top homology for $d$ odd, it is sufficient to assume that $X$ is a pseudomanifold; that is, that the singularities of $X$ lie in codimension greater than one. Apart from manifolds, the class  of pseudomanifolds contains, for example, suspensions of manifolds. Any homology class in a topological space is an image of the fundamental class of a pseudomanifold that maps into this space.

Now, the exact sequence of the cofibration (\ref{thecofibration}) implies that when $X$ is a connected pseudomanifold of dimension $d$, the spaces $\exp_n X$ and $\expb_n X$ have the same homology in dimension $nd-d+1$ if $d$ is odd or if $H_{nd-d+1}(\expb_n X)$ is finite.

In turn, the homology of $\expb_n X$ in these dimensions can be expressed in terms of the homology of $C_n^+(Y)$ via Lemma~\ref{quo}. We get the following:

\begin{proposition}\label{generaltwo}
Let $X$ be a connected pseudomanifold of dimension $d$. 
$$H_{nd-r}(\exp_n X)=H_{nd-r}(C_n^+(Y))$$
whenever $r<d-1$. This also holds for $r=d-1$ whenever $d$ is odd.
\end{proposition}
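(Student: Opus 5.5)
The plan is to chain two comparisons: first replace $\exp_n X$ by $\expb_n X$ using the cofibration (\ref{thecofibration}), then replace $\expb_n X$ by $C_n(Y)^+$ using the filtration of Lemma~\ref{quo}, tracking in each step the degrees where the error term can contribute.

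\emph{Step 1: from $\exp_n X$ to $\expb_n X$.} The long exact sequence of (\ref{thecofibration}) contains
$$H_{k}(\exp_n(X,*))\to H_k(\exp_n X)\to H_k(\expb_n X)\to H_{k-1}(\exp_n(X,*)).$$
Since $\exp_n(X,*)$ has dimension $(n-1)d$, both outer groups vanish for $k-1>(n-1)d$, i.e.\ for $k=nd-r$ with $r<d-1$. This gives $H_{nd-r}(\exp_n X)\cong H_{nd-r}(\expb_n X)$ for all $r<d-1$.

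For $r=d-1$ we have $k=nd-d+1$. The left term $H_{k}(\exp_n(X,*))$ still vanishes by dimension, but the right term is $H_{(n-1)d}(\exp_n(X,*))$, the top homology. By the discussion preceding the proposition, for a pseudomanifold $X$ with $d>1$ odd this top group is zero, because odd permutations reverse orientation. Hence the isomorphism persists at $r=d-1$ when $d$ is odd. If $d=1$ then $r<d-1$ is vacuous and $r=d-1=0$ would need separate treatment; I would check whether the statement's "$d$ odd" is meant to include $d=1$.

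\emph{Step 2: from $\expb_n X$ to $C_n(Y)^+$.} Consider the cofibration $\expb_{n-1}X\to\expb_n X\to \expb_n X/\expb_{n-1}X=C_n(Y)^+$ from Lemma~\ref{quo}. Since $\expb_{n-1}X$ is a quotient of $\exp_{n-1}X$, it has dimension at most $(n-1)d$. The exact sequence
$$H_k(\expb_{n-1}X)\to H_k(\expb_nX)\to H_k(C_n(Y)^+)\to H_{k-1}(\expb_{n-1}X)$$
then gives isomorphisms for $k-1>(n-1)d$, which is the case $r<d-1$. For $r=d-1$ we again need $H_{(n-1)d}(\expb_{n-1}X)=0$ for $d$ odd, and this is the top homology of $\expb_{n-1}X$. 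The text asserts that $\expb_m X$ and $\exp_m X$ share top homology, which vanishes for odd $d$ by the same orientation-reversal argument applied to $X^{m}\to\exp_m X$. So the isomorphism again persists.

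\emph{Main obstacle.} The delicate point is this top-degree vanishing for the pseudomanifold case: $H_{md}(\exp_m X)=0$ and $H_{md}(\expb_m X)=0$ when $d$ is odd. I would argue it via the ramified covering from $X^m$, branched in codimension $d\ge 2$. A top cycle is a locally constant coefficient assignment on top simplices, taken on the complement of the codimension-$\geq 2$ singular set. A transposition fixes the diagonal locus while reversing local orientation, so it forces every coefficient $c$ to satisfy $c=-c$. For $\expb_m X$, top cycles relative to the collapsed subspace are handled identically, since that subspace has lower dimension. The integral coefficients are torsion-free, so $c=-c$ gives $c=0$ and the top group vanishes.
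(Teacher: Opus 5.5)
Your two-step route is the same as the paper's, namely the discussion just before the proposition. You compare $\exp_nX$ with $\expb_nX$ through (\ref{thecofibration}), then $\expb_nX$ with $C_n(Y)^+$ through Lemma~\ref{quo}. For $r<d-1$ dimension counts suffice, and your argument is complete there. For $r=d-1$ you need vanishing of top homology when $d$ is odd, and that is where the gap is. Your orientation argument needs a transposition, i.e.\ at least two free points, so it fails for $n=2$. In that case $\exp_2(X,*)\cong X$ via $x\mapsto\{x,*\}$, with monodromy group $S_1$. The space you need in Step~2 is $\expb_1X=X$. For oriented $X$ both have top homology $H_d(X)=\Z$, so nothing forces $c=-c$.

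This cannot be patched, because the statement is false for $n=2$. Take $X=S^d$ with $d\geq 3$ odd and $r=d-1$. Since $\exp_2S^d=SP^2S^d$, we get $H_{d+1}(\exp_2S^d;\Q)=H_{d+1}(S^d\times S^d;\Q)_{S_2}=0$. On the other hand, $C_2(\R^d)\cong\R^{d+1}\times\RP^{d-1}$ (midpoint, half-distance, direction), so $H_{d+1}(C_2(\R^d)^+)\cong H_0(\RP^{d-1})=\Z$. Concretely, in your Step~2 the connecting map $H_{d+1}(C_2(\R^d)^+)\to H_d(\expb_1S^d)=\Z$ is an isomorphism, not zero. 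This also contradicts Theorem~\ref{one}, which gives $\exp_2S^d\simeq_\Q S^d$.

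The paper's sketch asserts the same vanishing without any restriction, so the clause for $r=d-1$ must be restricted to $n\geq 3$. Your suspicion about $d=1$ is also justified: $\exp_2S^1$ is a M\"obius band, with $H_2=0$, while $C_2(\R)^+\cong S^2$.

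With $n\geq 3$ and $d\geq 3$ your argument goes through, with one further condition: the regular part of $X$ must be connected. Your passage from the local relation $c=-c$ near the collision locus to all coefficients tacitly uses this. It is the usual strong-connectivity clause in the definition of a pseudomanifold, which the paper's looser definition omits.
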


With the exception of the case of $r=d-1$ with $d$ even, Theorem~\ref{two} is a consequence of this statement and the facts about the topology of the configuration spaces in $\R^d$ (Theorems~\ref{confsym} and \ref{duality} in the next section).

\subsection{A torsion class in codimension $d$}
When $X$ is an even-dimensional oriented pseudomanifold of dimension $d$ there are two distinguished classes in $H_{nd-d}(\exp_n X)$. These are the images of the fundamental classes of $\exp_{n-1} X$ and of $\exp_{n}(X,*)$; denote them by $\alpha$ and $\beta$, respectively.
\begin{proposition}\label{alphabeta}
If $X$ is a suspension of an odd-dimensional pseudomanifold, 
 $$(n-1)(\alpha - 2\beta)=0$$  in $H_{nd-d}(\exp_n X)$.
\end{proposition}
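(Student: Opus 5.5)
The plan is to produce an explicit $((n-1)d+1)$-chain in $\exp_n X$ whose boundary is $(n-1)(\alpha-2\beta)$, using the suspension structure. I do not have a fully verified construction; the version below is the one I would try first.

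Write $X=\Sigma M$ with $\dim M=d-1$ odd. Points of $X$ are $(s,m)$ with $s\in[-1,1]$ and $m\in M$, the poles $s=\pm1$ are collapsed, and the basepoint is $*=(1,m)$. All classes will be computed through the ramified quotient maps of Section 2. Specifically, $q\colon X^{n-1}\to\exp_n(X,*)$, $(x_i)\mapsto\{x_1,\dots,x_{n-1},*\}$, sends $[X^{n-1}]$ to $(n-1)!\,\beta$. Likewise $p\colon X^{n-1}\to\exp_{n-1}X$ sends $[X^{n-1}]$ to $(n-1)!\,\alpha$. Both statements use $d$ even, so that $S_{n-1}$ preserves orientation.

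The key construction is a "doubling" map. For $t\in[0,1]$, choose self-maps $\varphi_t$ of $X$ with $\varphi_0=\mathrm{id}$. Define
$$\Phi\colon [0,1]\times X^{n-1}\to\exp_nX,\qquad (t,x_1,\dots,x_{n-1})\mapsto\{x_1,\varphi_t(x_1),x_2,\dots,x_{n-1}\}.$$
At $t=0$ this factors through $\exp_{n-1}X$ and recovers $p$. Since $\mathrm{id}_X$ has degree $1$ and cannot be deformed to a constant, $\varphi_t$ cannot simply push everything to $*$. Instead I would let $\varphi_t$ be the fiberwise flow in the $s$-coordinate that moves both hemispheres toward their poles. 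Then $\varphi_1$ sends the open northern hemisphere to $*$ and the open southern hemisphere to the south pole, and it is discontinuous only along the equator $M$. To repair this, I would replace $[0,1]\times X^{n-1}$ by a pseudomanifold $W$: cut along $\{1\}\times M\times X^{n-2}$ and glue in $M\times[-1,1]\times X^{n-2}$, on which the second point sweeps the meridian through $x_1$.

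Three boundary contributions then appear. The end $t=0$ gives $(n-1)!\,\alpha$. The northern hemisphere at $t=1$ gives a map of the form $q$, namely $\{x_1,*,\dots\}$, restricted to half of the first factor. The southern hemisphere gives the same map with the south pole in place of $*$. The glued-in piece is a cycle in degree $1+(d-1)+(n-2)d=(n-1)d$. The rotation of $M$ (which exists because $\dim M$ is odd) or the reflection $s\mapsto -s$ is what I would use to identify the south-pole contribution with $\beta$, with a sign; the reflection has degree $-1$. This should produce the coefficient $2$ on $\beta$, one copy from each hemisphere. The factor $n-1$ should come from symmetrizing over which of the $n-1$ points is doubled, after dividing the total identity by $(n-2)!$ rather than $(n-1)!$.

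The main obstacle is the bookkeeping. One must check that $W$ really is a pseudomanifold with the stated boundary, that the glued-in equatorial piece is null-homologous (or contributes only a multiple of $\beta$) in $H_{(n-1)d}(\exp_nX)$, and that the orientation signs come out as $\alpha-2\beta$ rather than $\alpha+2\beta$. I would check all three first in the case $X=S^2$, $n=3$, against Tuffley's computation of the torsion summand $\Z/(n-1)\Z$. I would then pass to general $n$ by the degree count: an $(n-1)!$ versus $(n-2)!$ discrepancy between the doubled and undoubled parametrizations is exactly what turns the identity into $(n-1)(\alpha-2\beta)=0$.
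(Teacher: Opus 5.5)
There is a genuine gap at the $t=1$ end, and it is exactly the part you flag as bookkeeping.

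\textbf{The $t=1$ end does not give $2\beta$.} The northern piece $\{x_1,*,x_2,\dots,x_{n-1}\}$ has $x_1$ only in the closed northern hemisphere. So it is half of the cycle $q_*[X^{n-1}]$, with boundary over $x_1\in M$. The southern piece is likewise half of the south-pole cycle. The glued-in piece $G\colon (m,\sigma,x_2,\dots)\mapsto\{(0,m),(\sigma,m),x_2,\dots\}$ is not a cycle: its ends $\sigma=\pm1$ are what cancel the boundaries of those two halves. It is also not degenerate, since it maps generically finite-to-one onto an $(n-1)d$-dimensional set. So ``null-homologous'' is not available, and ``one copy of $\beta$ from each hemisphere'' is not what your chain produces.

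The missing idea is to fill $G$ with the vertical pairs that straddle the equator: lower point in the southern hemisphere, upper point in the northern one. The rest of the boundary of that filling consists of two kinds of sets: those containing the south pole with the partner in the northern hemisphere, and those containing the north pole with the partner in the southern hemisphere. These are precisely the missing halves. Your $W$ plus this region is the paper's $P_n(X)$: all pairs on a common meridian, one below the other. Its boundary has only three strata (lower point at the south pole, upper point at the north pole, collision of the pair), which map to $(n-1)\beta$, $(n-1)\beta$ and $-(n-1)\alpha$. The nearest-pole flow is what creates the seam in the first place.

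To identify the south-pole class with $\beta$, take a path $\gamma$ from the south pole to $*$ and use the homotopy $s\mapsto s\cup\{\gamma(\tau)\}$ on $\exp_{n-1}X$. At each end this is a degree-one map onto the relevant subspace. A reflection does not do this job, since it reverses the orientation of $X$, and rotating $M$ does not move the poles at all.

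\textbf{The coefficient is wrong.} Built on $[0,1]\times X^{n-1}$, your chain yields at best $(n-1)!(\alpha-2\beta)=0$. You cannot divide by $(n-2)!$ in a group with torsion, and symmetrizing over which point is doubled only multiplies the relation further. The exact coefficient matters: the paper uses precisely that $n-1$ kills $\alpha-2\beta$ to get the summand $\Z\oplus\Z/(n-1)\Z$ in Theorem~\ref{twoa}.

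You need to build the chain on a domain in which the $n-2$ passive points are unordered. For instance, use $X\times\exp_{n-2}X$ instead of $X^{n-1}$, or work directly inside $\exp_2(A\times[0,1])\times\exp_n(A\times[0,1])$ as the paper does. Then the two ends map with degree $n-1$ onto $\exp_{n-1}X$ and $\exp_n(X,*)$ respectively. This is what the paper's auxiliary spaces $Q_{n-1}(X)$ and $R_{n-1}(X)$ accomplish.
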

For $X=S^2$, this fact was established in \cite{T3}.
\begin{proof}
Let $$Q_{n-1}(X)\subset X\times \exp_{n-1} X$$ 
be the subspace of pairs $(x, s)$ such that $x\in s$. In other words,  $Q_{n-1}(X)$ is the space of finite subsets of $X$ of cardinality at most $n-1$, one of whose elements is marked. 

The space $Q_{n-1}(X)$ is a pseudomanifold of dimension $(n-1)d$. The projection $$\pi: Q_{n-1}(X)\to \exp_{n-1} X$$  preserves the orientation and is generically $n-1$ to one. It follows that the map of fundamental classes induced by this projection is multiplication by $n-1$ so that the image of the fundamental class of $Q_{n-1}(X)$ in $H_{nd-d}(\exp_n X)$ is $(n-1)\alpha$. 

In the same fashion, define $R_{n-1}(X)\subset  X\times \exp_{n}(X,*)$ 
as the closure of the subspace of pairs $(x, s)$ such that $x\in s$ and $x\neq *$. Again, $R_{n-1}(X)$ is a pseudomanifold of dimension $(n-1)d$; the set of points of $R_{n-1}(X)$ with $x=s$ is of dimension $(n-2)d$. The map
$$R_{n-1}(X)\to\exp_{n}(X,*)$$ that forgets the marked point is generically $(n-1)$ to one; again, the image of the fundamental class of $R_{n-1}(X)$ in $H_{nd-d}(\exp_n X)$ is $(n-1)\beta$.

Assume now that $X=\Sigma A$ and thus there is a ``vertical'' direction in $X$, along the suspension coordinate.
Define $$P_n(X)\subset \exp_2 (A\times [0,1])\times  \exp_n (A\times [0,1])$$
as the subspace of pairs $(z, s)$ such that $z\subseteq s$ and, in the case when $z$ consists of two, rather than one, points, they lie one ``above'' the other.

The space $P_n(X)$ may be thought of as a ``pseudomanifold with boundary'' in the sense that the singularities of $P_n(X)$ not only lie in codimension $>1$ but also meet the set of boundary points in codimension $>1$. The boundary of  $P_n(X)$ consists of 3 kinds of pairs: 
\begin{enumerate}
\item one of the elements in   $\exp_n (A\times [0,1])$ lies in $A\times\{0\}$;
\item one of the elements in   $\exp_n (A\times [0,1])$ lies in $A\times\{1\}$;
\item the two marked elements coincide; that is, $z\in \exp_2 (A\times [0,1])$ is a singleton.
\end{enumerate}
We denote the corresponding parts of the boundary by $U_1$, $U_2$ and $U_3$ respectively.
The map 
$$P_n(X)\to \exp_n(X)$$
that is induced by the map $A\times [0,1]\to X$ and forgets the marked points sends both $U_1$ and $U_2$ to the class $(n-1)\beta$ while $U_3$ is sent into $-(n-1)\alpha$; thus, $(n-1)(2\beta-\alpha)$ is a boundary.
\end{proof}



\section{The cohomology of the configuration spaces $C_n(\R^d)$}
We express the homology of $\exp_n X$ in terms of that of $\exp_n (X,*)$ and $\expb_n X$, and the homology of these two spaces, in turn, is computed via the configuration spaces of unordered points in $Y=X-\{*\}$. When $X$ is a sphere, we need some results about the configuration spaces of points in Euclidean spaces.

\medskip

For a space $Y$, denote by $C_n(Y)$ the configuration space of $n$ unordered distinct points on $Y$ and by $F_n(Y)$ the configuration space of $n$ ordered distinct points. The symmetric group $S_n$ acts freely on $F_n(Y)$ with the quotient $C_n(Y)$. 
When $Y$ is an odd-dimensional manifold, $C_n(Y)$ is non-orientable. Let $\Z_\pm$ be the orientation local system on $C_n(Y)$.  In what follows, we will write simply $F_n$ for $F_n(\R^d)$ and $C_n$ for  $C_n(\R^d)$.

The cohomology of the configuration spaces in Euclidean spaces is well-studied, see \cite{KConf} and references therein. We will need the following:
\begin{theorem}\label{three}
For $d$  even, $$H^{i}(C_n, \Q)=\Q$$
for $i=0,d-1$ and 0 otherwise.
For $d$ odd,
$$H^{i}(C_n, \Q_\pm)=\Q$$
for $i=[n/2](d-1)$ and 0 otherwise. Here, $\Q_\pm=\Z_\pm\otimes\Q$.
\end{theorem}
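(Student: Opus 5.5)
We derive the statement from the $S_n$-module structure of $H^*(F_n,\Q)$ via transfer. Since $S_n$ acts freely on $F_n$ with quotient $C_n$, and we work rationally, we have
$$H^*(C_n,\Q)=H^*(F_n,\Q)^{S_n},\qquad H^*(C_n,\Q_\pm)=\bigl(H^*(F_n,\Q)\otimes\sgn\bigr)^{S_n}$$
for $d$ odd. The orientation character of $C_n$ is the sign for $d$ odd, because a transposition reverses orientation of $(\R^d)^n$ exactly when $d$ is odd. So in each case the task is to compute the multiplicity of the trivial, respectively sign, representation in each graded piece of $H^*(F_n,\Q)$. For this we use the Arnold--Cohen presentation: $H^*(F_n,\Z)$ is generated by classes $\omega_{ij}$ of degree $d-1$ subject to
$$\omega_{ij}=(-1)^d\omega_{ji},\qquad \omega_{ij}^2=0,\qquad \omega_{ij}\omega_{jk}+\omega_{jk}\omega_{ki}+\omega_{ki}\omega_{ij}=0.$$
Here $S_n$ acts by permuting indices. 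The cohomology is concentrated in degrees $k(d-1)$, with Poincaré polynomial $\prod_{j=1}^{n-1}(1+jt^{d-1})$.

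The key structural input is the Lehrer--Solomon description of these representations, in the form used for configuration spaces in $\R^d$ (see \cite{KConf}). It says that
$$H^{k(d-1)}(F_n,\Q)\cong\bigoplus_{c}\mathrm{Ind}_{Z(c)}^{S_n}\varphi_c,$$
where $c$ runs over conjugacy classes of permutations with $n-k$ cycles, $Z(c)$ is the centralizer, and $\varphi_c$ is an explicit one-dimensional character. The character depends on $d$ only through its parity. For $d$ odd the sum of all these pieces is the regular representation, and the Poincaré polynomial matches the count of permutations by number of cycles. By Frobenius reciprocity, the multiplicity of a one-dimensional representation $\chi$ in $\mathrm{Ind}_{Z(c)}\varphi_c$ is $1$ if $\chi|_{Z(c)}=\varphi_c$ and $0$ otherwise. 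So everything reduces to checking, class by class, when $\varphi_c$ agrees with the restriction of the trivial or sign character.

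For $d$ odd, the regular representation contains $\sgn$ exactly once, so $H^*(C_n,\Q_\pm)$ is one-dimensional. It remains to locate its degree. We check that $\varphi_c=\sgn|_{Z(c)}$ precisely for $c$ a product of $[n/2]$ disjoint transpositions. That class has $n-[n/2]$ cycles, which gives degree $[n/2](d-1)$. As a sanity check, the top Arnold monomial $\omega_{12}\omega_{34}\cdots$ is sign-equivariant under the wreath product $S_2\wr S_{[n/2]}$ (times $S_1$ if $n$ is odd), since for $d$ odd each $\omega_{ij}$ is symmetric in $i,j$, and its sign-symmetrization is nonzero. For $d$ even, we verify that $\varphi_c$ is trivial on $Z(c)$ only for $c=\mathrm{id}$ and $c$ a single transposition, which gives $\Q$ in degrees $0$ and $d-1$. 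The degree $d-1$ invariant is $\sum_{i<j}\omega_{ij}$. Alternatively, since the graded $S_n$-module depends only on the parity of $d$ up to regrading, this case reduces to $d=2$. There it is Arnold's classical computation that $H^*(\mathrm{Br}_n,\Q)$ is $\Q$ in degrees $0$ and $1$ for $n\geq2$.

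The main obstacle is the bookkeeping of the characters $\varphi_c$: pinning down the signs that come from the graded-commutativity of odd-degree classes when $d$ is even, and from the symmetry $\omega_{ij}=\omega_{ji}$ when $d$ is odd. If this becomes too delicate, I would instead argue inductively via the Fadell--Neuwirth fibrations $F_n\to F_{n-1}$ with fibre a wedge of $n-1$ spheres $S^{d-1}$, computing the invariants by a Serre spectral sequence argument.
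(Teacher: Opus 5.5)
Your route is the paper's: transfer reduces the statement to the $S_n$-invariants (for $d$ even) or the $\sgn$-isotypic part (for $d$ odd) of $H^*(F_n,\Q)$. The paper then just quotes the answer, citing Proposition~9 of \cite{AG} in the odd case, whereas you derive it from a Lehrer--Solomon decomposition. Your reduction of the even case to $d=2$ is sound, and with Arnold's computation of $H^*(\mathrm{Br}_n,\Q)$ it settles that case on its own.

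In the odd case, however, the one concrete argument you give contains a sign error. By the relation you wrote, $\omega_{ij}=(-1)^d\omega_{ji}$, the generators are \emph{antisymmetric} for $d$ odd, not symmetric. Swapping the two points is the antipodal map of the even-dimensional sphere $S^{d-1}$, which has degree $-1$.

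This antisymmetry is exactly why $(12)$ acts by $-1=\sgn(12)$ on $\omega_{12}\omega_{34}\cdots$. Swapping two pairs is an even permutation and acts trivially, because the generators have even degree $d-1$ and commute. With symmetric generators, as you claim, the monomial would be fixed by $S_2\wr S_{[n/2]}$ and could not be sign-equivariant.

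Once corrected, this check removes the need for the $\varphi_c$ bookkeeping you flag as the main obstacle. Up to the nonzero factor $|S_2\wr S_{[n/2]}|$, the sign-symmetrization of $\omega_{12}\omega_{34}\cdots$ is a signed sum, one term per (near-)perfect matching, of distinct monomials from the standard basis $\omega_{i_1j_1}\cdots\omega_{i_kj_k}$ with $i_s<j_s$ and $j_1<\cdots<j_k$. It is therefore a nonzero alternating invariant in degree $[n/2](d-1)$.

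As you say, $H^*(F_n,\Q)$ is the regular representation for $d$ odd. Freeness of the action forces every $\sigma\neq 1$ to have Lefschetz number $0$, and all the cohomology lies in even degrees. So $\sgn$ occurs exactly once in total, hence only in degree $[n/2](d-1)$, which completes the odd case without computing any $\varphi_c$.
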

These cohomology groups can be computed as the spaces of invariants (when $d$ is even) or alternating invariants (when $d$ is odd) of the action of $S_n$ on the cohomology of the configuration spaces $F_n$. The ring $H^*(F_n, \Z)$ is generated by $n(n-1)/2$ classes $\alpha_{ij}$ of degree $d-1$; each  $\alpha_{ij}$ is the pullback of the generator of $F_2=S^{d-1}$ under the map $F_n\to F_2$ that erases all the points except  those with indices $i$ and $j$. 
The only invariants under the action of $S_n$ on $H^*(F_n, \Z)$ are in degrees 0 and $d-1$ for $d$ even; the respective spaces are 1-dimensional.  As for the twisted cohomology of $C_n$ for $d$ odd, the corresponding space of alternating invariants in $H^*(F_n, \Z)$ is described in \cite[Proposition~9]{AG}.

The integral cohomology is rather more complex; however, in a certain range, it equals the cohomology of the symmetric group $S_n$. 

\begin{theorem}\label{confsym}
Let $n>1$ and $d>2$. If $d$ is even,
$$H^r(C_n,\Z) = \begin{cases} H^r(S_n,\Z) &\text{for}\ r<d-1\\   H^{d-1}(S_n,\Z)\oplus  \Z&\text{for}\ r=d-1.\end{cases}$$
If $d$ is odd, 
$$H^r(C_n,\Z_\pm) = \begin{cases} H^r(S_n,\sgn) &\text{for}\ r<d-1\ \text{or for }  r=d-1  \ \text{and}\ n>3,  \\ H^{d-1}(S_n,\sgn)\oplus  \Z &\text{for}\ r=d-1   \ \text{and}\ n=2,3.\end{cases}$$
where $\Z_\pm$ is the orientation local system and $\sgn$ is the sign representation of $S_n$ on $\Z$. 
\end{theorem}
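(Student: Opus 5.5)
The plan is to use the Cartan--Leray (Serre) spectral sequence of the free $S_n$-action on $F_n=F_n(\R^d)$:
$$E_2^{p,q}=H^p\bigl(S_n, H^q(F_n,\Z)\otimes M\bigr)\Rightarrow H^{p+q}(C_n, \mathcal M),$$
where $M=\Z$ for $d$ even and $M=\sgn$ for $d$ odd. For $d$ odd we have $\mathcal M=\Z_\pm$. This identification holds because the orientation character of $C_n$ is the sign of the permutation: swapping two points of $\R^d$ reverses orientation exactly when $d$ is odd.

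The first step uses the structure of $H^*(F_n,\Z)$ recalled after Theorem~\ref{three}: it is free abelian and concentrated in degrees that are multiples of $d-1$, with $H^0=\Z$ and $H^{d-1}$ freely spanned by the classes $\alpha_{ij}$. Hence the rows $0<q<d-1$ vanish. In total degree $r<d-1$ only the bottom row contributes, so $H^r(C_n,\mathcal M)=H^r(S_n,M)$; no differential can reach or leave these entries in a way that matters, since $d_k$ from row $0$ goes downward out of the region. In total degree $d-1$ the contributions are $E^{d-1,0}=H^{d-1}(S_n,M)$ and $E^{0,d-1}=H^0(S_n,H^{d-1}(F_n)\otimes M)$. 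The only possible nonzero differential out of $E^{0,d-1}$ is the transgression $d_d\colon E_d^{0,d-1}\to E_d^{d,0}$.

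The second step computes the invariants $H^0(S_n, H^{d-1}(F_n)\otimes M)$. The action on the $\alpha_{ij}$ is a signed permutation action. Under the transposition exchanging $i$ and $j$, $\alpha_{ij}$ goes to $\alpha_{ji}=(-1)^d\alpha_{ij}$, because the antipodal map on $S^{d-1}$ has degree $(-1)^d$. For $d$ even the $\alpha_{ij}$ are therefore permuted without signs, so the invariants are $\Z\cdot\sum\alpha_{ij}$. For $d$ odd we twist by $\sgn$ and need $\sigma\alpha_{ij}=\sgn(\sigma)\alpha_{ij}$ on each orbit. The stabilizer of the pair $\{i,j\}$ contains $(ij)$, which acts by $-1$ and has sign $-1$, so that condition is consistent. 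It also contains transpositions $(kl)$ disjoint from $\{i,j\}$, which exist iff $n\ge 4$; these fix $\alpha_{ij}$ but have sign $-1$, which forces the coefficient to be zero. So the twisted invariants are $\Z$ for $n=2,3$ and $0$ for $n>3$. (For $n=3$ one checks the single orbit directly.)

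The third step, which I expect to be the main obstacle, is showing that the extension splits and that the transgression vanishes. For the transgression I would exhibit the invariant class as pulled back from a genuine class on $C_n$. For $d$ even, the map $C_n\to C_2$ is not available, so I would instead use $\sum\alpha_{ij}$ as a transfer-type class, or a map $C_n\to C_n(\R^\infty)$ relative to the Gauss map. A cleaner route is rational: the class survives rationally by Theorem~\ref{three}, while $E^{d,0}=H^d(S_n,M)$ is torsion for $n>1$. A differential from $\Z$ to a torsion group must then have image of finite index in $\Z$, and since the rational class survives, only a multiple $N\cdot$generator survives. To show $N=1$ I would produce a cohomology class on $C_n$ restricting to the generator. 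For $d$ even this class comes from the map $C_n\to \RP^{d-1}$-bundle type constructions, or via the Gysin/transfer of $\alpha_{12}$ along $F_n/(S_2\times S_{n-2})\to C_n$. The resulting filtration $0\to H^{d-1}(S_n,M)\to H^{d-1}(C_n)\to \Z\to 0$ then splits automatically because $\Z$ is free. The case $d=2$ is excluded, presumably because for $d=2$ the relevant rows interfere.
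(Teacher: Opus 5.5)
\textbf{Overall.} Your skeleton matches the paper's: the Cartan--Leray spectral sequence of $F_n\to C_n$, and the vanishing of the rows $0<q<d-1$. In total degree $\le d-1$ this leaves only $E^{r,0}$ and $E^{0,d-1}$, with the single possible differential $d_d\colon E_d^{0,d-1}\to E_d^{d,0}$. Your direct computation of $H^0(S_n,H^{d-1}(F_n)\otimes M)$ from the signed permutation action on the $\alpha_{ij}$ is correct, and it replaces the paper's appeal to \cite[Proposition~9]{AG}. In that computation, the odd transposition $(kl)$ in the stabilizer of $\{i,j\}$ kills the coefficient when $n\ge 4$.

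\textbf{The problem is your third step.} You treat the vanishing of the transgression ($N=1$) as the main obstacle. You plan to prove it by producing a class on $C_n$ that pulls back to the generator of $E_2^{0,d-1}$. That step would fail, because the claim is false.

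For $n=2$ we have $C_2\simeq\RP^{d-1}$ and $F_2\simeq S^{d-1}$. The pullback $H^{d-1}(\RP^{d-1};\mathcal M)\cong\Z\to H^{d-1}(S^{d-1})\cong\Z$ is multiplication by $2$, since the covering is a double cover. Equivalently, $d_d\colon \Z\to H^d(S_2;M)\cong\Z/2$ must be onto, because $H^d(\RP^{d-1};\mathcal M)=0$.

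The same happens for larger $n$. Take the $S_2$-equivariant embedding $F_2\hookrightarrow F_n$ with points $1,2$ in a small ball and the others fixed far away. Under it, the invariant generator restricts to the generator of $H^{d-1}(F_2)$. Naturality then shows that $N$ is even for every $n$ (for $d$ even, and for $d$ odd with $n=3$). So neither the transfer from $F_n/(S_2\times S_{n-2})$ nor any other construction can give $N=1$.

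\textbf{The fix: you do not need $N=1$.} The kernel of any homomorphism from $\Z$ to the torsion group $H^d(S_n;M)$ is a nonzero subgroup $N\Z$. You wrote ``image of finite index'' where you mean the kernel, and the rational input from Theorem~\ref{three} is not needed. Hence $E_\infty^{0,d-1}=N\Z$ is infinite cyclic. The extension $0\to H^{d-1}(S_n;M)\to H^{d-1}(C_n;\mathcal M)\to N\Z\to 0$ then splits because $N\Z$ is free. The theorem only asserts an abstract isomorphism, so this finishes the proof, and it is exactly the paper's argument. Drop the attempt to show $N=1$.

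\textbf{Minor point on $d=2$.} The exclusion of $d=2$ has nothing to do with rows interfering. For $d=2$ there are no rows strictly between $0$ and $d-1$, and the same argument gives $H^1(C_n;\Z)\cong\Z$.
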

This statement follows from the cohomological Cartan-Leray spectral sequence for the action of $S_n$ on $F_n$ (see \cite[Ch.~VII,Theorem~7.9]{Brown} or (\cite[\S 9,Theorem~9.2]{Bredon} for the case of $d$ odd). 
The $E_2$-term of this spectral sequence has the form
$$E_2^{p,q} = H^{p}(S_n, H^q(F_n, M))$$
where $M=\Z$ considered as a trivial $S_n$-module when $d$ is even and $M=\sgn$ when $d$ is odd. 

One observes that  $F_n$ is $d-2$-connected 
and $$E_2^{0,d-1} = H^{d-1}(F_n,\Z)^{S_n}=\Z$$
for $d$ even.
Since the cohomology of $S_n$ is torsion, $E_\infty^{0,d-1}$ is of finite index in $E_2^{0,d-1}$ and, therefore, is infinite cyclic. $H^{d-1}(C_n,\Z)$ is an extension of $E_\infty^{0,d-1}=\Z$ by  $E_\infty^{d-1,0}=H^{d-1}(S_n,\Z)$; any such extension splits and this proves the statement for even $d$. 

If $d$ is odd, 
$$H^q(F_n,M)= H^q(F_n,\Z)\otimes \sgn.$$ In particular, we have
$$E^{0,d-1}_2=H^0(S_n, H^{d-1}(F_n,\Z)\otimes\sgn) = \mathrm{Hom}_{S_n}(\sgn, H^{d-1}(F_n,\Z)).$$
In addition, 
$E^{p,0}_2=H^p(S_n, \sgn)$ is a finite group for all $p$. 

We know from \cite[Proposition~9]{AG} that $H^{d-1}(F_n,\Z)$ contains the sign representation of $S_n$ only when $n=2,3$ and, in this case, the multiplicity is 1. As a consequence, $E^{0,d-1}_2$ (and, hence, $E^{0,d-1}_\infty$) is zero when $n>3$ and $\Z$ for $n=2,3$. It follows that $H^{d-1}(C_n,\Z_\pm)$ is an extension of $E^{0,d-1}_\infty$ by the finite group $E^{d-1,0}_\infty=H^{d-1}(S_n,\sgn)$ which has no other option but to split as a direct sum.

\medskip

The cohomology of the configuration spaces $F_n$ and $C_n$ is related by duality to the homology of their one-point compactifications $F_n^+$ and $C_n^+$. 
\begin{theorem}\label{duality}
For all $d$ we have
$$H_{r}(F_n^+) =   H^{nd-r}(F_n, \Z).\ $$
Also, for $d$ even 
$$H_{r}(C_n^+) =   H^{nd-r}(C_n, \Z),\ $$
and for $d$ odd
$$H_{r}(C_n^+)= H^{nd-r}(C_n, \Z_\pm).$$
\end{theorem}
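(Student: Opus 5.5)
The plan is to reduce the statement to Poincaré--Lefschetz duality for manifolds with boundary. The first step is to identify $F_n^+$ and $C_n^+$ with quotients $M/\partial M$ of compact manifolds. Pick a large closed ball $B\subset\R^d$ and an $\epsilon>0$. Let $M\subset F_n(\R^d)$ be the compact subspace of ordered configurations lying in $B$ with all pairwise distances at least $\epsilon$. Arranged suitably (for instance via a Fulton--MacPherson type compactification, or by smoothing corners), $M$ is a compact $nd$-manifold with boundary. Its interior is $S_n$-equivariantly homeomorphic to $F_n$, and $M\hookrightarrow F_n$ is an $S_n$-equivariant homotopy equivalence (push points apart radially and rescale). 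Moreover, $F_n^+$ is homotopy equivalent to $M/\partial M$, since collapsing the complement of the interior of $M$ to a point gives the one-point compactification: configurations going to infinity or having colliding points both escape every compact set. The action of $S_n$ on $M$ is free, so $N=M/S_n$ is a compact manifold with boundary, $N\simeq C_n$, and $C_n^+\simeq N/\partial N$.

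The second step applies Lefschetz duality. For any compact $nd$-manifold $N$ with boundary there is a natural isomorphism $H_r(N,\partial N;\Z)\cong H^{nd-r}(N;\mathcal{O})$, where $\mathcal{O}$ is the orientation local system; in the non-orientable case this is the twisted form of duality. Since $(N,\partial N)$ is a good pair (the boundary has a collar), $H_r(N,\partial N)=\tilde H_r(N/\partial N)=\tilde H_r(C_n^+)$. The reduced versus unreduced distinction only matters in degree $0$, which we regard as a convention.

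It remains to identify the orientation system in each case. $F_n$ is an open subset of $\R^{nd}$, so it is orientable and the first formula follows. For $C_n$, a transposition in $S_n$ acts on $\R^{nd}$ by swapping two blocks of $d$ coordinates, a linear map of determinant $(-1)^d$. Hence $S_n$ preserves orientation when $d$ is even, and $C_n$ is orientable. When $d$ is odd, $S_n$ acts through the sign character, and the orientation system of $C_n$ is exactly $\Z_\pm$, the local system associated to $\sgn$ via $\pi_1(C_n)\to S_n$.

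The main obstacle is the first step: making precise that $F_n^+$ is the Thom-type quotient $M/\partial M$ with $M$ a genuine compact manifold with boundary carrying a free $S_n$-action. The collision locus is not smooth-boundaried in the naive $\epsilon$-separation model, because corners appear where several distances equal $\epsilon$. I would handle this either by citing the Fulton--MacPherson/Axelrod--Singer compactification, which is a manifold with corners and can be treated as a topological manifold with boundary, or by avoiding a compact model altogether. The second route uses duality in the form $H_r^{BM}(C_n;\Z)\cong H^{nd-r}(C_n;\mathcal{O})$ for the open manifold $C_n$, together with the identification $H^{BM}_r(C_n)=\tilde H_r(C_n^+)$, which holds because $C_n^+$ is a compact, locally contractible space (it is a CW complex) with $C_n$ as the complement of a point.
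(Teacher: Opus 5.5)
Your proof is correct, but it reaches duality by a different mechanism than the paper.

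The paper never builds a compact manifold with boundary whose interior is the configuration space. Instead it realizes the open manifold as the complement of a closed set in a closed manifold and applies duality for such complements.
\begin{itemize}
\item For $F_n$ this is immediate: $F_n=S^{nd}-V^+$, so $H_r(F_n^+)=H_r(S^{nd},V^+)\cong H^{nd-r}(F_n)$.
\item For $C_n$ the analogous ambient space $S^{nd}/S_n$ is singular along the diagonals. The paper therefore compactifies $F_n$ in $(\CP^q)^n$ (for $d=2q$) or in $(\RP^d)^n$ (for $d$ odd), passes to the quotient by $S_n$, and invokes a resolution of singularities. This yields a closed manifold containing $C_n$ as an open subset, to which duality (twisted when $d$ is odd) is applied.
\end{itemize}

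You instead divide by $S_n$ only where the action is free, either on a compact model with boundary or on the open manifold itself, so no singular quotient ever arises. All you need is Lefschetz duality, or Poincar\'e duality $H^{BM}_r(C_n)\cong H^{nd-r}(C_n;\mathcal O)$ together with $H^{BM}_*(C_n)\cong \tilde H_*(C_n^+)$. The latter identification holds because $C_n^+$ is a CW complex with the added point as a vertex, which you can get from an $S_n$-equivariant triangulation of $(S^{nd},V^+)$.

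What each route buys:
\begin{itemize}
\item Your route is lighter and more self-contained than appealing to resolution of singularities.
\item It also makes explicit two points the paper leaves tacit. First, the orientation character of $C_n$ is $\sgn^{d}$, since a block swap has determinant $(-1)^d$. Second, the statement must be read in reduced homology in degree $0$, where $H_0(C_n^+)=\Z$ but $H^{nd}(C_n)=0$.
\item The paper's route is quicker for $F_n$, and it fits how the paper later uses the arrangement $V$ and its strata.
\end{itemize}

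Of your two options, the Borel--Moore one is the cleanest. The naive $\epsilon$-separated model would indeed need extra work: its active constraints need not be transverse, so it is not even a manifold with corners without further argument. Falling back on a Fulton--MacPherson type model or on Borel--Moore homology, as you propose, is the right call.
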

Indeed, $F_n$ is the complement to a closed set $V^+$ in the compact oriented manifold $S^{nd}=((\R^{d})^n)^+$ so that
$$H_{r}(F_n^+) = H_{r}(S^{nd}/V^+)= H_{r}(S^{nd}, V^+,\Z)= H^{nd-r}(S^{nd}-V^+,\Z) =   H^{nd-r}(F_n, \Z).$$
In fact, when $d=2q$, $F_n$ may be identified with a subset of $\C^{qn}\subset (\CP^{q})^n$ on which $S_n$ acts by permutations. The quotient can be desingularized by blowups so as to obtain a compact orientable manifold and the duality applies as before. For $d$ odd, we have $F_n\subset (\RP^{d})^n$. Again, the quotient by the action of $S_n$ on $(\RP^{d})^n$ may be desingularized and the resulting manifold is non-orientable; then, one uses duality with twisted coefficients.

\medskip
Finally, let us consider the relation between the free parts of the \emph{integral} cohomology of ordered and unordered configuration spaces 
for even $d$.

The quotient map $q: F_n\to C_n$ extends to a map of 1-point compactifications $$q^+: F_n^+\to C_n^+$$
and induces a pushforward map 
$$H^{i}(F_n,\Z)=H_{nd-i}(F_n^+)\to H_{nd-i}(C_n^+)=H^{i}(C_n,\Z).$$
For $i=0$ both sides are infinite cyclic and the map is multiplication by $n!$ since, away from the basepoint at infinity, the quotient map is an $n!$-fold covering. 

The other interesting degree here is $i=d-1$. The generator $\alpha_{ij}$ of the group $H^{d-1}(F_n,\Z)$ has the following interpretation: if a ``vertical'' direction is chosen in $\R^d$, the class $\alpha_{ij}$ is the intersection number in $F_n$ with the submanifold $Y_{ij}\subset F_n$ that consists of all the configurations in which the $i$th point lies  ``directly above'' the $j$th point.
The quotient of $H^{d-1}(C_n,\Z)$ by the torsion is infinite cyclic and is generated by the intersection number in $C_n$ with the subset $Z\subset C_n$ that consists of all the configurations in which some point lies ``directly above'' another point.  The quotient map $Y_{ij}\to Z$ preserves orientation and is generically an $(n-2)!$-fold covering, so the map 
$$H^{d-1}(F_n,\Z)\to H^{d-1}(C_n,\Z)$$
sends the generator $\alpha_{ij}$ to $(n-2)!$ times a generator of $H^{d-1}(C_n,\Z)$ modulo torsion. In summary,
\begin{theorem}\label{push}
The pushforward $H^{i}(F_n,\Z)\to H^{i}(C_n,\Z)$
is a multiplication by $n!$ in degree $0$ and sends the generator $\alpha_{ij}$ to $(n-2)!$ times a generator of the free part in degree $d-1$.
\end{theorem}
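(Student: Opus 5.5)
The degree $0$ statement and the degree $d-1$ statement are proved separately, and both are read through the duality of Theorem~\ref{duality}. Under that duality the pushforward $H^i(F_n,\Z)\to H^i(C_n,\Z)$ is the map $q^+_*\colon H_{nd-i}(F_n^+)\to H_{nd-i}(C_n^+)$. Each of these homology classes is represented by a closed (pseudo)manifold cycle in the compactification, and $q^+_*$ sends the class of a cycle to the class of its image with multiplicity. So in each degree the task is to compute a covering degree and to check orientations. Throughout, $d$ is even, so $S_n$ acts on $F_n$ preserving orientation and $C_n$ is orientable.

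\emph{Degree $0$.} Here $H^0(F_n)=H_{nd}(F_n^+)=\Z$ is generated by the fundamental class of $F_n^+$, and $H_{nd}(C_n^+)=\Z$ is generated by the fundamental class of $C_n^+$. Away from the point at infinity, $q$ is an orientation-preserving $n!$-sheeted covering of open oriented $nd$-manifolds. I would choose a regular point $c\in C_n$ and compare local homology at $c$. The fiber $q^{-1}(c)$ consists of $n!$ points, and at each of them $q$ is a local orientation-preserving homeomorphism. Hence $q^+_*[F_n^+]=n!\,[C_n^+]$.

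\emph{Degree $d-1$.} I would use the Poincaré--Lefschetz interpretation of the classes. Fix a vertical direction. Then $\alpha_{ij}$ is dual to the class of the closure $\overline{Y_{ij}}^+$ in $H_{nd-d+1}(F_n^+)$. This is justified because $Y_{ij}$ is a closed codimension $d-1$ submanifold of $F_n$: it is the pullback of one point of $S^{d-1}$ under the map $F_n\to F_2\simeq S^{d-1}$, so it is cooriented and gives a proper cycle. Likewise the generator of the free part of $H^{d-1}(C_n,\Z)$ is dual to $[\overline Z^+]$, as stated just before Theorem~\ref{push}.

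The map $q$ carries $Y_{ij}$ onto $Z$. A generic point of $Z$ is an unordered configuration with exactly one pair in vertical position. Its preimages in $Y_{ij}$ are the labelings that put label $i$ on the upper point and $j$ on the lower point, with the remaining $n-2$ labels arbitrary. That gives $(n-2)!$ preimages, and each is a local diffeomorphism $Y_{ij}\to Z$. The set where two or more pairs are vertical has higher codimension, so it does not affect the degree. This yields $q^+_*[\overline{Y_{ij}}^+]=\pm(n-2)!\,[\overline Z^+]$.

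\emph{Main obstacle.} The hard step is the orientation and sign bookkeeping in degree $d-1$. One must show that all $(n-2)!$ sheets contribute with the same sign, so that they add rather than cancel. The permutation relating two sheets fixes $i$ and $j$. It acts on $\R^{nd}$ by permuting blocks of even dimension $d$, so it preserves orientation. It also preserves the coorientation of $Y_{ij}$, since that coorientation is pulled back from the $(i,j)$ coordinates, which are unchanged. Therefore all sheets contribute the same sign.

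A second point needs care: $[\overline Z^+]$ must really be a generator of the free part. Theorem~\ref{confsym} says the free part is $\Z$ in degree $d-1$, but it does not single out a generator. I would handle this by pairing $[\overline Z]$ with a homology class of $C_n$, namely the loop in which two points orbit each other once, extended in a suitable way. That loop meets $Z$ exactly once transversally, so the intersection number is $1$. Hence $[\overline Z^+]$ is primitive modulo torsion, which completes the degree $d-1$ statement.
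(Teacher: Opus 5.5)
Your core argument is the paper's. You read the pushforward through the duality of Theorem~\ref{duality}. The factor $n!$ in degree $0$ comes from the $n!$-sheeted covering away from the point at infinity, and the factor $(n-2)!$ in degree $d-1$ comes from $Y_{ij}\to Z$ being generically $(n-2)!$-to-one. Your sign check is a correct expansion of the paper's bare claim that this map preserves orientation: permutations fixing $i$ and $j$ preserve the ambient orientation because $d$ is even, and they preserve the coorientation of $Y_{ij}$ pulled back from the $(i,j)$ coordinates. For $d=2$ the locus with two vertical pairs has codimension one in $Z$. It is therefore cleaner to take the class of $Z$ to be the pushforward of the properly immersed manifold $Y_{ij}/S_{n-2}$, where $S_{n-2}\subset S_n$ fixes $i$ and $j$. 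Over this manifold $Y_{ij}$ is an honest $(n-2)!$-sheeted covering.

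The step that needs fixing is your extra check that $Z$ is dual to a generator of the free part, which the paper simply asserts.
\begin{itemize}
\item A loop is a $1$-cycle, so it pairs with $H^{d-1}$ only when $d=2$.
\item The natural $(d-1)$-dimensional version of ``two points orbit each other once'' is the family $v\mapsto\{c+v,c-v,p_3,\ldots,p_n\}$, with $v\in S^{d-1}$ and $p_3,\dots,p_n$ fixed far away in general position. This family meets $Z$ twice, at $v=e$ and at $v=-e$, where $e$ is the upward unit vector. These give the same unordered configuration with the two points in opposite roles.
\item Both crossings have the same sign. On the lift to $F_n$ they are measured by $\alpha_{12}$ and $\alpha_{21}$, and the antipodal map of $S^{d-1}$ has degree $(-1)^d=1$. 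So the pairing is $2$, which says nothing about divisibility by $2$.
\end{itemize}
The correct test cycle is the half-turn family parametrized by $[v]\in\RP^{d-1}$. It is a cycle in $C_n$ that does not lift to $F_n$, and it is orientable precisely because $d$ is even. It meets $Z$ transversally in the single point $[e]$, so its pairing with the class of $Z$ is $\pm1$, and that class is primitive modulo torsion. For $d=2$ this is the exchange loop $\sigma_1$, not the full twist $\sigma_1^2$.
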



\section{The homology of $\exp_n S^d$ }\label{spheres}

\subsection{The spectral sequence for the homology of $\expb_n S^d$}

The topology of $\expb_n S^d$ notably depends on whether $d$ is even or odd and, in the latter case, on whether $n$ is even or odd. Let us consider first the case when $d$ is even.

The homological spectral sequence associated with the filtration on the space $\expb_n S^d$ whose $k$th term is $\expb_k S^d$ has the $E^1$- term with
\begin{equation}E^1_{p,q}=H_{p+q}(C_{p}^+)  = H^{p(d-1)-q}(C_{p},\Z).\label{ss}\end{equation}
From Theorem~\ref{three} we have that, tensored with the rational numbers, the nonzero terms of $E^1_{p,q}$ are equal to $\Q$ for $p\leq n$ and $q=p(d-1)$ or $q=(p-1)(d-1)$.

Over $\Q$, the spectral sequence necessarily collapses at the $E^2$-term since no differential of degree 2 or higher connects non-zero terms; whatever term does not get eliminated at the first stage, remains in the limit. We claim that all the entries with the exception of a copy of $\Q$ at $E^1_{n,n(d-1)}$ cancel each other. 

Indeed, by Lemma~\ref{contra}  
 the space $\expb\, S^d= \cup_r \expb_r S^d$  is contractible. The inclusion map 
 $$\expb_n S^d \to \expb\, S^d$$ 
 respects the filtration by the subspaces $\expb_r S^d$ and the corresponding map of the $E^1$-terms the spectral sequences is the inclusion of the $E^1$-term for $\expb_n S^d$ as the first $n+1$ columns into the $E^1$-term for $\expb\, S^d$. This means that if the spectral sequence is extended infinitely to the right, beyond $p=n$, by the formula (\ref{ss}), all the nonzero terms should cancel each other at the first stage. However, the only entry that might be cancelled by a differential coming from the region $p>n$ is at $E^1_{n,n(d-1)}$ so all the other entries must be cancelled by the differentials of the spectral sequence for $\expb_n S^d$.

The entry at $E^1_{n, n(d-1)}$ is the only one remaining in the limit, so we get the following statement:

\begin{proposition}\label{uf}
For $d$ even, the space $\expb_n S^d$ is rationally equivalent to the sphere $S^{nd}$.
\end{proposition}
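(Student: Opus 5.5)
The argument made just before the statement is essentially the proof; I would write it out carefully in the following order.

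First I set up the spectral sequence. Filter $\expb_n S^d$ by the subspaces $\expb_k S^d$, $0\le k\le n$, where $\expb_0 S^d$ is the basepoint. By Lemma~\ref{quo}, the successive quotients are $C_k(\R^d)^+$, since $Y=S^d-\{*\}=\R^d$. Theorem~\ref{duality} then gives
$$E^1_{p,q}=\tilde H_{p+q}(C_p^+)=H^{pd-p-q}(C_p,\Z),$$
with the $p=0$ column treated as the reduced homology of a point. Tensoring with $\Q$ and applying Theorem~\ref{three}, the only nonzero rational entries are copies of $\Q$. They sit at total degree $pd$, coming from $H^0(C_p)$, and at total degree $pd-d+1$, coming from $H^{d-1}(C_p)$, for $2\le p\le n$. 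Column $p=1$ contributes only $\tilde H_*(S^d)$, in degree $d$. In the indexing of (\ref{ss}), these entries lie on the rows $q=p(d-1)$ and $q=(p-1)(d-1)$.

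Next I locate the possible differentials. The differential $d^r$ has bidegree $(-r,r-1)$ and lowers total degree by one. A class in total degree $pd$ can therefore only hit total degree $pd-1$. Among the nonzero entries this degree is attained only by $p'd-d+1$ with $p'=p+1$, and that entry lies in column $p+1$, to the right of column $p$, which is the wrong direction for a differential. So the only possible nonzero differentials go from $E^1_{p+1}$ (total degree $(p+1)d-d+1$) to $E^1_p$ (total degree $pd$), which is a $d^1$. Hence the rational spectral sequence degenerates at $E^2$. Both this and the previous step are bookkeeping.

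The key step is to show that $d^1\colon \Q=E^1_{p+1,\cdot}\to E^1_{p,\cdot}=\Q$ is an isomorphism for every $1\le p\le n-1$. I would compare with the filtered space $\expb\,S^d=\bigcup_k\expb_k S^d$. This space is contractible by Lemma~\ref{contra}, so its spectral sequence, which converges since the filtration is exhaustive and each degree sees only finitely many columns, has zero abutment. Because it degenerates at $E^2$ by the same degree argument, its $E^2$ must vanish. Its $E^1$ page is the same pattern continued for all $p$, and the inclusion $\expb_n S^d\subset\expb\,S^d$ identifies the first $n+1$ columns together with their $d^1$. Vanishing of $E^2$ for $\expb\,S^d$ forces every map $d^1\colon(\text{degree }(p+1)d-d+1\text{ in column }p+1)\to(\text{degree }pd\text{ in column }p)$ to be an isomorphism. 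All of these with $p+1\le n$ are differentials internal to the truncated sequence. In the truncated sequence the only survivor is therefore the class at total degree $nd$ in column $n$, whose would-be killer sits in column $n+1$ and is absent. Hence $H_*(\expb_n S^d;\Q)\cong H_*(S^{nd};\Q)$.

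Finally, to upgrade this to a rational equivalence, I note that $\expb_n S^d$ is simply connected for $d\ge2$, for instance via the cell structure of \cite{MS}, or because it is a quotient of $\exp_n S^d$, which is connected enough. A map $S^{nd}\to\expb_n S^d_{\Q}$ realizing the generator then gives the equivalence. Alternatively, I would simply read the statement as asserting an isomorphism of rational homology, which is all that is used later.

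The main obstacle is checking that the comparison map really identifies the $d^1$'s. This requires the filtration of $\expb\,S^d$ to restrict to that of $\expb_n S^d$, which holds by definition. It also requires some care with the column $p=0$ and with reduced versus unreduced homology.
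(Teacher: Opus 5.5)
Your proposal is correct and is essentially the paper's proof. It uses the same filtration spectral sequence with $E^1_{p,q}=H^{p(d-1)-q}(C_p,\Z)$ and the same collapse at $E^2$ for degree reasons. It then compares with the contractible $\expb\,S^d$, which forces every $d^1$ from total degree $pd+1$ in column $p+1$ to total degree $pd$ in column $p$ to be an isomorphism, so only the class in total degree $nd$ survives. Your separate treatment of column $p=1$ and the upgrade to an honest rational equivalence via simple connectivity are small refinements the paper does not bother with.

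There is one arithmetic slip in your degree count: $(p+1)d-d+1=pd+1$, not $pd-1$. The correct reason is that total degree $pd-1$ is unoccupied for $d>2$, and for $d=2$ it is occupied only in column $p$ itself. Meanwhile a class in total degree $pd-d+1$ can reach an occupied spot only via $d^1$ into total degree $(p-1)d$. This gives exactly the conclusion you state.
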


When $n$ is odd, we have
\begin{equation}E^1_{p,q}=H_{p+q}(C_{p}^+)  = H^{p(d-1)-q}(C_{p}, \Z_\pm).\end{equation}\label{ss2}

Over $\Q$, the nonzero terms of $E^1_{p,q}$ are equal to $\Q$ for $p\leq n$ and $q=(p-[p/2])(d-1)$.
Again, the spectral sequence has the shape that forces it to collapse at the term $E^2$. For the same reason as before, all the entries cancel each other, with the exception of the entry $E^1_{n, (n+1)(d-1)/2}$ when $n$ is even. When $n$ is odd, all the terms must cancel themselves out and we obtain the following:

\begin{figure}
\centering
\includegraphics[width=150pt]{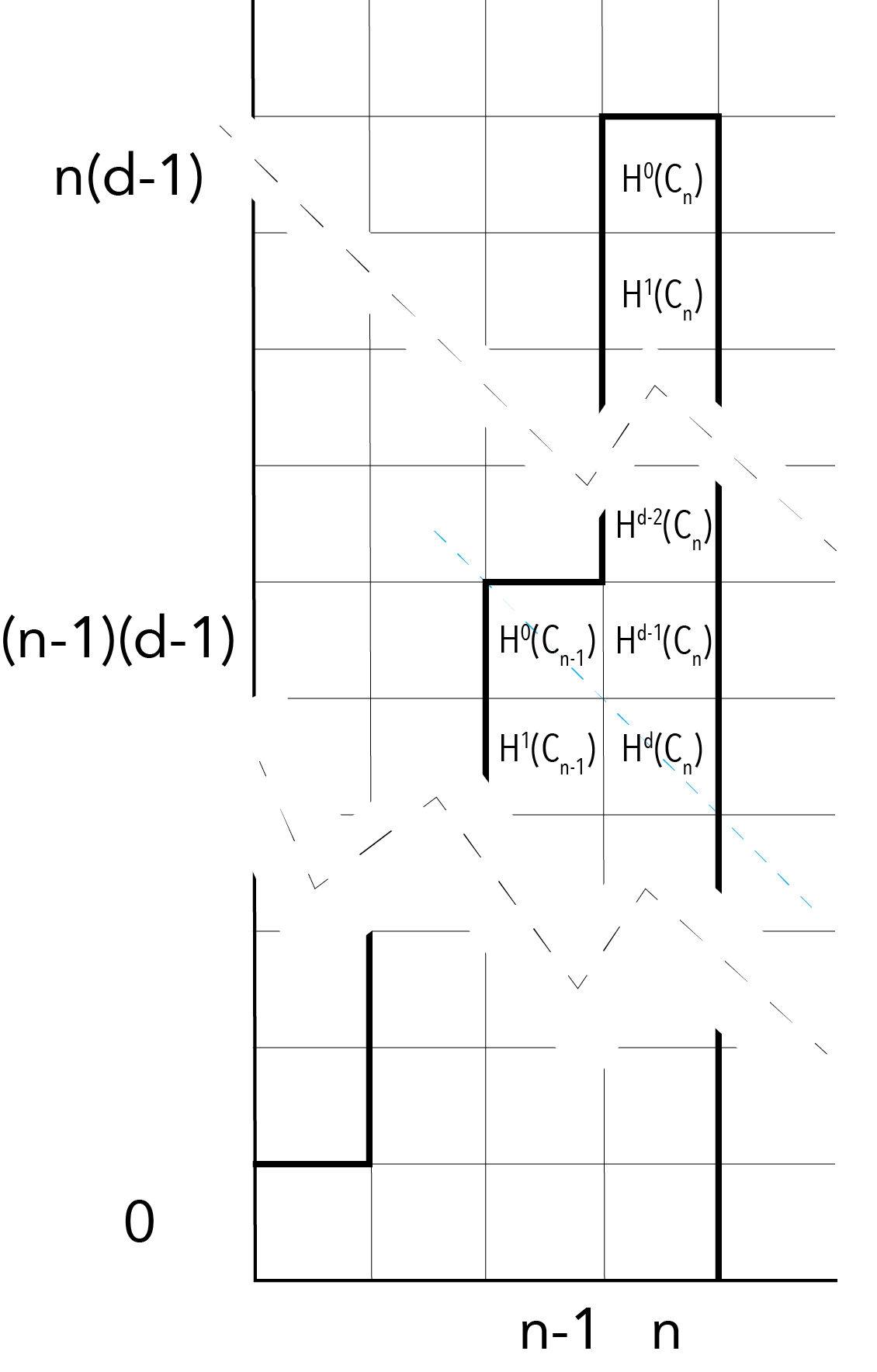}
\caption{The spectral sequence for the homology of $\expb_n S^d$}
\end{figure}

\begin{proposition}\label{uff}
For $d$ and $n$ odd, the space $\expb_n S^d$ is rationally equivalent to the sphere $S^{(n+1)(d+1)/2-1}$. For $d$ odd and $n$ even, $\expb_n S^d$ has the rational homology of a point. 
\end{proposition}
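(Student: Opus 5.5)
We follow the argument that gave Proposition~\ref{uf}, now with twisted coefficients. First we set up the homological spectral sequence of the filtration of $\expb_n S^d$ by the subspaces $\expb_p S^d$. By Lemma~\ref{quo} with $Y=\R^d$, its $E^1$-term is
$$E^1_{p,q}=H_{p+q}(C_p^+)=H^{p(d-1)-q}(C_p,\Z_\pm),$$
where the second equality is Theorem~\ref{duality} for odd $d$. Tensoring with $\Q$ and applying Theorem~\ref{three}, column $p$ (for $1\le p\le n$) contains exactly one copy of $\Q$, namely in total degree $p+q$ with $p(d-1)-q=[p/2](d-1)$. This gives $q=(p-[p/2])(d-1)$ and total degree $pd-[p/2](d-1)$.

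Next we check that the differentials are very restricted. Write $t(p)=pd-[p/2](d-1)$ for the total degree in column $p$. Then $t(2k)=2k+k(d-1)$ and $t(2k+1)=t(2k)+d$, while $t(2k+2)=t(2k+1)+1$. A differential lowers the total degree by one and moves from column $p$ to column $p-r$ with $r\ge 1$. For $d>1$ the function $t$ is strictly increasing, with steps $d$ and $1$ alternating. So the only possible nonzero differentials are $d^1\colon E^1_{2k+2}\to E^1_{2k+1}$, and the sequence collapses at $E^2$. Consequently each pair $(2k+1,2k+2)$ either cancels through $d^1$ or survives entirely.

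To decide which happens, we compare with $\expb\, S^d=\cup_r\expb_r S^d$, which is contractible by Lemma~\ref{contra}. Its spectral sequence has the same $E^1$, extended to all $p\ge1$, and its first $n$ columns are those of $\expb_n S^d$, with the maps between them compatible. Contractibility forces every entry to die. Since $t$ is strictly increasing, every differential of the infinite sequence is a $d^1\colon E^1_{2k+2}\to E^1_{2k+1}$, and each entry in column $2k+1$ can only be killed by the one in column $2k+2$. Hence all these $d^1$ are rational isomorphisms. The same isomorphisms appear in the spectral sequence of $\expb_n S^d$ whenever both columns lie in $p\le n$.

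If $n$ is even, the columns pair up completely, everything cancels, and $\expb_n S^d$ has the rational homology of a point. If $n$ is odd, the column $p=n$ is unpaired and survives alone. We identify its degree: for $n=2k+1$,
$$t(n)=nd-k(d-1)=(k+1)(d+1)-1=\tfrac{(n+1)(d+1)}{2}-1.$$
So $\expb_n S^d$ has the rational homology of $S^{(n+1)(d+1)/2-1}$.

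The text asserts rational equivalence to a sphere, so the final step is to upgrade this homology statement. We would argue simple connectivity, for instance from the connectivity results of \cite{MS} for $\exp_n$ carried over through the cofibration (\ref{thecofibration}). A simply connected space whose rational homology is that of a sphere is then rationally a sphere. We expect this upgrade, rather than the spectral sequence bookkeeping, to be the main obstacle.
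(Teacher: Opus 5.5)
Your proposal is correct and follows the paper's own argument: the filtration spectral sequence with $E^1_{p,q}=H^{p(d-1)-q}(C_p,\Z_\pm)$, collapse at $E^2$ for degree reasons, and comparison with the contractible space $\expb\, S^d$, which forces every entry to cancel except the one in column $n$ when $n$ is odd. Your function $t(p)$ just makes explicit the paper's brief appeal to the ``shape'' of the spectral sequence. The paper only computes rational homology here, and that is all it means by ``rationally equivalent''. So your final upgrade is an extra, and an easy one: $\exp_n S^d$ is simply connected by the connectivity result quoted in the introduction, and $\exp_n(S^d,*)$ is connected, so $\expb_n S^d$ is simply connected and the rational Hurewicz theorem finishes.
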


\subsection{The homology of $\expb_n S^d$ in top $d$ degrees} 

Over the integers, the spectral sequence does not necessarily collapse at the term $E^2$; for our purposes, it is sufficient to consider the cofibration (\ref{unorcof}) with $Y=R^d$:
\begin{equation}\label{unorcof2}C_{n-1}^+ \to \expb_n S^d/ \expb_{n-2} S^d \to C_{n}^+.\end{equation}

Since $C_{n-1}^+$ is a pseudomanifold of dimension $nd-d$, which is orientable whenever $d$ is even and $\expb_{n-2} S^d$ of dimension $nd-2d$, 
we have
$$H_{nd-r}(\expb_n S^d) = H_{nd-r}(C_{n}^+)$$
for all $r<d-1$. For $d$ even, the connecting map
$$H_{nd-d+1}(C_{n}^+)\to H_{nd-d}(C_{n-1}^+)=\Z$$
in the long exact homology sequence of (\ref{unorcof2}) is injective on the free part of  $H_{nd-d+1}(C_{n}^+)$: indeed, it coincides with the differential $d_1$ in the spectral sequence for $\expb_n S^d$. (We shall see later that it is, in fact, a multiplication by $n-1$). For $d$ odd,  $H_{nd-d}(C_{n-1}^+)=0$.

Theorems~\ref{confsym} and \ref{duality} give the following:
\begin{proposition}\label{pointedtwo}
We have
$$H_{nd-r}(\expb_n S^d)=
\begin{cases}H^{r}(S_n,\Z) \qquad\quad \quad   \text{for\ } d\ \text{even}, 0\leq r \leq d-1,\\
H^{r}(S_n, \sgn)\, \qquad\quad  \text{for\ } d\ \text{odd},\ 0\leq r < d-1\ \text{or}\ r=d-1\ \text{and}\ n>3,\\
H^{r}(S_n, \sgn)\oplus\Z \,\quad   \text{for\ } d\ \text{odd},\ r=d-1\ \text{and}\ n=2,3.
\end{cases}.$$
\end{proposition}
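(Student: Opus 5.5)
The plan is to read off $H_{nd-r}(\expb_n S^d)$ for $0\le r\le d-1$ from the long exact homology sequence of the cofibration (\ref{unorcof2}), and then to translate the resulting groups into group cohomology using Theorems~\ref{duality} and \ref{confsym}.

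First, the low-codimension range. The quotient $\expb_n S^d/\expb_{n-2} S^d$ agrees with $\expb_n S^d$ in homology above degree $nd-2d+1$, since $\expb_{n-2}S^d$ has dimension $nd-2d$. In addition, $C_{n-1}^+$ has dimension $nd-d$. So the exact sequence of (\ref{unorcof2}) gives
$$H_{nd-r}(\expb_n S^d)\cong H_{nd-r}(C_n^+)\qquad\text{for } r<d-1,$$
and for $r=d-1$ it gives an exact sequence
$$0\to H_{nd-d+1}(\expb_n S^d)\to H_{nd-d+1}(C_n^+)\xrightarrow{\ \partial\ } H_{nd-d}(C_{n-1}^+).$$
By Theorem~\ref{duality}, $H_{nd-r}(C_n^+)=H^r(C_n,\Z)$ for $d$ even and $H^r(C_n,\Z_\pm)$ for $d$ odd. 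Theorem~\ref{confsym} then identifies this with $H^r(S_n,\Z)$, respectively $H^r(S_n,\sgn)$, for $r<d-1$. This settles every case with $r<d-1$.

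Next, the case $r=d-1$ with $d$ odd. Here $H_{nd-d}(C_{n-1}^+)$ is the top homology of a non-orientable pseudomanifold, so it vanishes. Hence $H_{nd-d+1}(\expb_n S^d)=H^{d-1}(C_n,\Z_\pm)$. Theorem~\ref{confsym} gives $H^{d-1}(S_n,\sgn)$ for $n>3$ and $H^{d-1}(S_n,\sgn)\oplus\Z$ for $n=2,3$, which is exactly the claimed answer.

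The case $r=d-1$ with $d$ even is the main point. By Theorem~\ref{confsym}, $H_{nd-d+1}(C_n^+)=H^{d-1}(S_n,\Z)\oplus\Z$, and the target of $\partial$ is $H_{nd-d}(C_{n-1}^+)=\Z$, the fundamental class. Since $\Z$ is torsion-free, $\partial$ kills the torsion summand $H^{d-1}(S_n,\Z)$. It remains to show that $\partial$ is injective on the free $\Z$ summand; the kernel is then precisely $H^{d-1}(S_n,\Z)$.

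To prove injectivity, I identify $\partial$ with the differential $d^1\colon E^1_{n,n(d-1)-d+2}\to E^1_{n-1,(n-1)(d-1)}$ of the spectral sequence (\ref{ss}). Rationally both groups are $\Q$. In the proof of Proposition~\ref{uf}, every rational entry except $E^1_{n,n(d-1)}$ must cancel at the $E^1$ stage. The entry $E^1_{n-1,(n-1)(d-1)}$ can only be cancelled by this $d^1$, since nothing to its right in columns $\le n$ has the right total degree. So this $d^1$ is a rational isomorphism, hence injective on the free part.

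The delicate step is this last identification: one has to check that the connecting map of (\ref{unorcof2}) really is the $E^1$ differential, and that no other rational entry could cancel $E^1_{n-1,(n-1)(d-1)}$. Both follow from the filtration structure, namely that $E^1$ comes from the successive quotients given by Lemma~\ref{quo}, together with the degree count in Theorem~\ref{three}.
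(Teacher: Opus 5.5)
Your argument is the paper's own. It uses the same cofibration (\ref{unorcof2}) and the same dimension count for $r<d-1$, and Theorems~\ref{duality} and~\ref{confsym} for the translation into group cohomology. For $d$ even, it identifies $\partial$ with the $E^1$-differential, which is rationally nonzero by the cancellation argument of Proposition~\ref{uf}. One small slip: $d^1$ preserves $q$ and the source has $p+q=nd-d+1$, so the differential is $E^1_{n,(n-1)(d-1)}\to E^1_{n-1,(n-1)(d-1)}$. Its source is not $E^1_{n,n(d-1)-d+2}$.

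There is, however, a genuine gap in the case $d$ odd, $r=d-1$, and the paper's proof has the same hole, since it asserts the same vanishing. You deduce $H_{nd-d}(C_{n-1}^+)=0$ from the non-orientability of $C_{n-1}(\R^d)$. That holds only for $n-1\ge 2$, where indeed $H_{(n-1)d}(C_{n-1}^+)=H^0(C_{n-1},\Z_\pm)=0$. For $n=2$ you have $C_1=\R^d$ and $C_1^+=S^d$, so the target of $\partial$ is $H_d(S^d)=\Z$. This is not a harmless oversight: $\partial\colon H_{d+1}(C_2^+)\cong\Z\to\Z$ is an isomorphism. To see this, note that $\expb_2 S^d$ is the one-point compactification of $\exp_2\R^d\cong\R^d\times(\R^d/\pm1)$. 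Hence $\expb_2 S^d\simeq\Sigma^{d+1}\RP^{d-1}$ and $H_{d+1}(\expb_2 S^d)=\tilde H_0(\RP^{d-1})=0$. Equivalently, this $\partial$ is the differential $E^1_{2,d-1}\to E^1_{1,d-1}$, which must be a rational isomorphism for Proposition~\ref{uff} (rational acyclicity of $\expb_2 S^d$ for $d$ odd) to hold. So the third case of the statement is false for $n=2$. The correct value there is $H^{d-1}(S_2,\sgn)=0$, and the extra $\Z$ appears only for $n=3$. For $n\geq 3$ your proof goes through as written.
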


By Lemma~\ref{quo}, the successive quotients of the terms of the filtration on $\exp_{n+1} (S^d,*)$ coincide with those of $\expb_n\, S^d$. Since $\exp (S^d,*)$ is contractible, the previous arguments which concern the spaces $\expb_n\, S^d$ extend to the spaces $\exp_{n+1} (S^d,*)$ word for word. We obtain:
\begin{proposition}\label{ufff}
For $d$ even, the space $\exp_{n} (S^d,*)$ is rationally equivalent to the sphere $S^{(n-1)d}$.
For $d$ odd and $n$ even, the space $\exp_{n} (S^d,*)$ is rationally equivalent to the sphere $S^{n(d+1)/2-1}$. For $d$  and $n$ odd, $\exp_{n} (S^d,*)$  has the rational homology of a point. 
\end{proposition}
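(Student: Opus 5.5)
The plan is to run the spectral sequence argument used for Propositions~\ref{uf} and \ref{uff} on the filtration
$$\exp_1(S^d,*)\subset \exp_2(S^d,*)\subset\cdots\subset\exp_n(S^d,*).$$
Its first term $\exp_1(S^d,*)$ is the single point $\{*\}$. By Lemma~\ref{quo}, the successive quotients are
$$\exp_{k+1}(S^d,*)/\exp_k(S^d,*)=C_k(\R^d)^+ .$$
The $E^1$-term therefore has the same shape as in (\ref{ss}), but with the columns indexed by $k=0,\dots,n-1$, where $k$ is the number of non-basepoint elements. The entry in column $k$ is
$$H_{*}(C_k^+)=H^{kd-*}(C_k,\Z) \text{ or } H^{kd-*}(C_k,\Z_\pm),$$
by Theorem~\ref{duality}, and column $0$ is the basepoint. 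Rationally, Theorem~\ref{three} determines every entry. For $d$ even, column $k\ge 1$ contains $\Q$ in total degrees $kd$ and $kd-d+1$. For $d$ odd, it contains a single $\Q$ in total degree $kd-[k/2](d-1)$.

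First I check that the sequence collapses at $E^2$, exactly as before. The nonzero entries lie on two lines (for $d$ even) or one staircase line (for $d$ odd). Any differential $d^r$ with $r\ge 2$ would have to join entries whose total degrees differ by one, and for $d>1$ no two entries in columns $k$ and $k-r$ are positioned that way. The only differentials that can be nonzero are $d^1$ between adjacent columns.

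Next I bring in the contractibility of $\exp(S^d,*)$ from Lemma~\ref{contra}. The inclusion $\exp_n(S^d,*)\to\exp(S^d,*)$ is filtered, and on $E^1$ it embeds the columns $k\le n-1$ as the first $n$ columns of the $E^1$-term for $\exp(S^d,*)$. In the infinite spectral sequence, everything except the basepoint class in degree $0$ must cancel at $E^2$. The point is that this cancellation uses only $d^1$'s. A $d^1$ from column $n$ to column $n-1$ can only hit the top entry of column $n-1$, so after truncating to $k\le n-1$ the only class that can survive besides degree $0$ is the top class of column $n-1$. That class survives exactly when the infinite sequence kills it by $d^1$ from column $n$.

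To decide when it survives, I compare the degrees, and this is the step that needs care. For $d$ even, the top entry of column $n-1$ sits in degree $(n-1)d$. Column $n$ has an entry in degree $nd-d+1$, one more than this, so it must cancel the class in the infinite sequence. Therefore $\exp_n(S^d,*)$ has a surviving rational class in degree $(n-1)d$, which gives $S^{(n-1)d}$.

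For $d$ odd, the entry of column $k$ sits in degree $kd-[k/2](d-1)$. Going from column $n-1$ to column $n$ raises this degree by exactly one precisely when $n$ is odd, that is, when $[n/2]=[(n-1)/2]+1$. So the class in column $n-1$ is killed from outside when $n$ is odd and survives when $n$ is even. For $n$ even its degree is
$$(n-1)d-\tfrac{n-2}{2}(d-1)=\tfrac{n(d+1)}{2}-1 .$$
For $n$ odd, all classes other than the basepoint cancel inside the truncated sequence, giving the rational homology of a point. This matches the claimed sphere $S^{n(d+1)/2-1}$ for $n$ even.

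Finally, I will record the identity $\exp_n(S^d,*)$-quotients $=$ $\expb_{n-1}$-quotients. With it the whole argument is literally Propositions~\ref{uf} and \ref{uff} with the index shifted by one, and the degree bookkeeping above is only a consistency check of that shift.
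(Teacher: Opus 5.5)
Your proposal is correct and is the paper's argument: Lemma~\ref{quo} gives $\exp_{k+1}(S^d,*)/\exp_k(S^d,*)=C_k^+$, which are the same quotients as for $\expb_{n-1}S^d$, and the contractibility of $\exp(S^d,*)$ lets the proof of Propositions~\ref{uf} and \ref{uff} run verbatim with the index shifted by one.

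There is one parity slip in your $d$ odd paragraph. The degree jump $d-([n/2]-[(n-1)/2])(d-1)$ from column $n-1$ to column $n$ equals $1$ exactly when $n$ is \emph{even}, since that is what $[n/2]=[(n-1)/2]+1$ means. So the column-$(n-1)$ class is hit from outside, and therefore survives the truncation, for $n$ even. As written, your sentence says the opposite and contradicts your own survival criterion, although your final conclusions are right.
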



In codimension $d$ our results are somewhat less conclusive:
\begin{proposition}\label{hopefullylast}
Let  $n\geq 3$. When $d>1$ is odd, 
$$H_{nd-d}(\expb_n S^d)\subseteq H^d(C_n,\Z_\pm)$$ is a subgroup of index at most 2. When $d$ is even,
$H_{nd-d}(\expb_n S^d)$ is an extension of $H^d(C_n,\Z)$ by a cyclic group of order $n-1$. This cyclic group is generated by the image of the fundamental class of  $\expb_{n-1} S^d$. 
\end{proposition}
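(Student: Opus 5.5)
We work with the long exact homology sequence of the cofibration (\ref{unorcof2}),
$$H_{nd-d+1}(C_n^+)\xrightarrow{\partial} H_{nd-d}(C_{n-1}^+)\to H_{nd-d}(\expb_n S^d/\expb_{n-2}S^d)\to H_{nd-d}(C_n^+)\xrightarrow{\partial} H_{nd-d-1}(C_{n-1}^+).$$
Since $\expb_{n-2}S^d$ has dimension $nd-2d$ and $d\ge 2$, passing to the quotient by it does not change homology in degree $nd-d$. Likewise the map $H_{nd-d}(\expb_n S^d)\to H_{nd-d}(\expb_n S^d/\expb_{n-2}S^d)$ is injective, and for $d>1$ it is an isomorphism, because $H_{nd-d-1}(\expb_{n-2}S^d)=0$ when $nd-d-1>nd-2d$. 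By Theorem~\ref{duality}, $H_{nd-d}(C_n^+)=H^d(C_n)$ with twisted coefficients when $d$ is odd, $H_{nd-d}(C_{n-1}^+)=H^{0}(C_{n-1})$, and $H_{nd-d-1}(C_{n-1}^+)=H^1(C_{n-1})$.

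\emph{Case $d$ odd.} Here $H_{nd-d}(C_{n-1}^+)=H^0(C_{n-1},\Z_\pm)=0$ for $n-1\ge 2$, since $\Z_\pm$ is a nontrivial local system. So $H_{nd-d}(\expb_n S^d)$ injects into $H^d(C_n,\Z_\pm)$, and the cokernel injects into $H^1(C_{n-1},\Z_\pm)$. For $d\ge 3$, Theorem~\ref{confsym} gives $H^1(C_{n-1},\Z_\pm)=H^1(S_{n-1},\sgn)$, which is $\Z/2$. Indeed, crossed homomorphisms $S_{n-1}\to\sgn$ modulo principal ones are determined by their values on transpositions, and these form the group $H^1=\Z/2$. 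Hence the index is at most $2$. If the case $n-1=2$ needs separate care I would check it directly.

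\emph{Case $d$ even.} Now $H_{nd-d}(C_{n-1}^+)=H^0(C_{n-1},\Z)=\Z$, generated by the fundamental class of $C_{n-1}^+$, which is the image of the fundamental class of $\expb_{n-1}S^d$. Next, $H_{nd-d-1}(C_{n-1}^+)=H^1(C_{n-1},\Z)$, which is $H^1(S_{n-1},\Z)=0$ when $d>2$ by Theorem~\ref{confsym}. For $d=2$ it is $\Z$, the abelianized braid group, and there I must show that the map $H^2(C_n)\to H^1(C_{n-1})$ vanishes, or handle it directly; I would try to see it as $d_1$ of the spectral sequence in a rationally dead region, which should force it to be zero on the torsion image. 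So we get the extension $0\to \Z/\mathrm{im}\,\partial\to H_{nd-d}(\expb_n S^d)\to H^d(C_n,\Z)\to 0$.

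The main step is to show that $\partial\colon H_{nd-d+1}(C_n^+)=H^{d-1}(C_n,\Z)\to\Z$ has image exactly $(n-1)\Z$. Torsion maps to zero, so only the free generator matters, and it is dual to the locus $Z$ of configurations with one point directly above another. To compute $\partial$ I would pull back to ordered configurations and use Theorem~\ref{push}. In $F_n^+$ the analogous boundary map sends $\alpha_{ij}$ to $\pm$ the collision cycle. The boundary of the closure of $Y_{ij}$ is where point $i$ collides with point $j$, giving the diagonal stratum, and this maps to $C_{n-1}^+$ with degree $(n-2)!\cdot\,$(number of labellings). Comparing the degrees $(n-2)!$ from Theorem~\ref{push} with those of the map $F_{n-1}\to C_{n-1}$ should yield the factor $n-1$. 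Equivalently, this is the computation in the proof of Proposition~\ref{alphabeta}, where the vertically-aligned pairs bound $(n-1)\alpha$ minus contributions at the poles, and those pole contributions vanish in $\expb_n$. I expect this bookkeeping of orientations and multiplicities to be the main obstacle. Once it is done, the cyclic subgroup is $\Z/(n-1)$, generated by the image of $[\expb_{n-1}S^d]$, as claimed.
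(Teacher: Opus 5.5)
Your proposal follows the paper's proof: the same exact sequence of (\ref{unorcof2}), the index bound from $H^1(C_{n-1},\Z_\pm)=H^1(S_{n-1},\sgn)=\Z/2$ for $d$ odd (the case $n-1=2$ needs no separate treatment), and the computation of $\partial$ as multiplication by $n-1$. That computation compares $h_1(\alpha_{ij})=(n-2)!$ times the free generator (Theorem~\ref{push}) with the degree $(n-1)!$ of $(F^{ij}_{n-1})^+\to C^+_{n-1}$.

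Two small remarks. First, the step you left open at $d=2$ closes as you suggest. By Theorem~\ref{three}, $H^2(C_n,\Z)$ is finite, and $H^1(C_{n-1},\Z)$ is torsion-free, so the map between them is zero. The paper instead cites Arnold's $H^2(C_n(\R^2),\Z)=0$, which yields the sharper $H_{2n-2}(\expb_n S^2)=\Z/(n-1)\Z$. Second, your "equivalent" argument via Proposition~\ref{alphabeta} only shows that $n-1$ kills the class. To get order exactly $n-1$ you must also know that the vertically aligned chain represents the free generator.
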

In particular, for $d=2$ we see that $H_{2n-2}(\expb_n S^2)=\Z/(n-1)\Z$ since the second integral cohomology of the braid group vanishes \cite{Arn2}.

\begin{proof}
Consider first the case of $d\geq 3$ odd. The homology exact sequence of the cofibration (\ref{unorcof2}) in degrees $nd-d$ and $nd-d-1$ gives
$$0\to H_{nd-d}(\expb_n S^d/\expb_{n-2} S^d)\to H_{nd-d}(C_n^+)\to H_{nd-d-1}(C_{n-1}^+)$$
that is,
$$0\to H_{nd-d}(\expb_n S^d)\to H^{d}(C_n,\Z_\pm)\to H^{1}(C_{n-1},\Z_\pm)=H^1(S_{n-1},\sgn)=\Z/2,$$
so that either $H_{nd-d}(\expb_n S^d)=H^d(C_n,\Z_\pm)$ or is a subgroup of index 2 in this group.

Let now $d$ be even. The segment 
$$H_{nd-d+1}(C_n^+)\to H_{nd-d}(C_{n-1}^+)\to H_{nd-d}(\expb_n S^d/\expb_{n-2} S^d)\to H_{nd-d}(C_n^+)\to H_{nd-d-1}(C_{n-1}^+)$$
of the homology exact sequence of (\ref{unorcof2}) is
$$H^{d-1}(S_n,\Z)\oplus\Z\xrightarrow{\partial_{nd-(d-1)}} \Z\to H_{nd-d}(\expb_n S^d)\to H^{d}(C_n,\Z)\to H^{1}(C_{n-1},\Z).$$
When $d=2$, $H^{2}(C_n,\Z)=0$ (\cite{Arn2}); when $d>2$, the last group is zero. The generator of $\Z=H_{nd-d}(C_{n-1}^+)$ is the image of the fundamental class of $\expb_{n-1} S^d$ under the quotient map that collapses $\expb_{n-2} S^d$. Proposition~\ref{hopefullylast} will be proved as soon as we show that $\partial_{nd-(d-1)}$ sends the generator of the free part of $H_{nd-d+1}(C_n^+)$ to $n-1$ times the generator of $H_{nd-d}(C_{n-1}^+)$.

Our method for computing $\partial_{nd-d+1}$ consists in covering the cofibration (\ref{unorcof2}) by an analogous cofibration which consists of the spaces of \emph{ordered} configurations.  

For $i\neq j$, let $V_{ij}\subset (\R^{d})^n$ be the codimension $d$ subspace 
$$V_{ij}=\{(x_1,\ldots, x_n)\,|\, x_i=x_j\}.$$
Then, if $$V=\bigcup_{i,j} V_{ij}$$ and
$$W=\bigcup_{\{i,j\}\neq \{i',j'\}} V_{ij}\cap V_{i'j'},$$ 
the complement of $V$ in $(\R^{d})^n$ is the configuration space $F_n$, 
while the complement of $W$ in $V$ is a disjoint union of $n(n-1)/2$ copies $F_{n-1}^{ij}$ of $F_{n-1}(\R^d)$. As before, write $S^{nd}$ for $((\R^{d})^n)^+$. 
We have $S^{nd}/V^+ = F_n^+$, $V^+/W^+ = \vee_{ij} (F_{n-1}^{ij})^+$ and there is a cofibration
\begin{equation}\label{orcof}\vee_{ij} (F_{n-1}^{ij})^+ \to S^{nd}/W^+\to  F_n^+.\end{equation}
Since $W^+$ is of codimension $2d$ in $S^{nd}$, we have $H_{nd-i}(S^{nd}/W^+)=0$ for $0<i<2d-1$. Therefore, the connecting homomorphism
$$\partial_{nd-d+1}: H_{nd-d+1}(F_n^+) \to H_{nd-d}(\vee_{ij} F_{n-1}^{ij})$$
is an isomorphism whenever $d>1$.

Now, the cofibration (\ref{orcof}) maps to the cofibration (\ref{unorcof}) by forgetting the order of the configurations. In the diagram
\[
\begin{array}{rcccccl}
H^{d-1}(F_n,\Z)&=&H_{nd-d+1}(F_n^+) & \xrightarrow{\partial_{nd-d+1}} & H_{nd-d}(\vee_{ij} (F_{n-1}^{ij})^+)&=&\Z^{n(n-1)/2} \\
&&\big\downarrow{h_1} & & \big\downarrow{h_2}&& \\
H^{d-1}(C_{n},\Z)&=&H_{nd-d+1}(C_{n}^+) & \xrightarrow{\partial_{nd-d+1}} & H_{nd-d}(C_{n-1}^+)&&\end{array}
\]
the vertical maps $h_1$ and $h_2$ are the pushforward maps described in Theorem~\ref{push}.
Each generator $\alpha_{ij}\in H^{d-1}(F_n,\Z)$, that is, the linking number with $V_{ij}$, is mapped by $h_1$ to $(n-2)!$ times the standard generator of the free part of $H^{d-1}(C_{n},\Z)$.  On the other hand, the map $h_2$ sends the image of $\alpha_{ij}$ to $(n-1)!$ times the generator, and we see that 
$\partial_{nd-d+1}$ is a multiplication by $n-1$ on the free part of $H^{d-1}(C_{n},\Z)$.

\end{proof}

\subsection{The homology of $\exp_n S^d$}

Consider the homology long exact sequence for the cofibration (\ref{thecofibration}) with $X=S^d$:
\begin{equation}\label{cofibrationspecialized}
\exp_{n} (S^d,*)\to\exp_n S^d\to \expb_n S^d.
\end{equation}

Theorem~\ref{one} follows immediately from Propositions~\ref{uf}, \ref{uff} and \ref{ufff}.
Let us prove Theorem~\ref{two}.
As noted after Proposition~\ref{generaltwo}, the only case that we need to handle is $d$ even and $r=d-1$.

When $d$ is even, $H_{nd-d}(\exp_{n} (S^d,*))=\Z$. The long exact sequence in degree $nd-d+1$ is of the form
$$
0\to H_{nd-d+1}(\exp_n S^d)\to H_{nd-d+1}(\expb_n S^d)\to\Z.
$$
The last map to $\Z$ is, actually, zero. Indeed, 
by Proposition~\ref{pointedtwo} we have $H_{nd-d+1}(\expb_n S^d)= H^{d-1}(S_n)$ which is torsion. We see that $H_{nd-r}(\exp_n S^d)$ is isomorphic to $H_{nd-r}(\expb_n S^d)$ for all $r<d$
and  Theorem~\ref{two} follows.

Finally, consider the situation in codimension $d$. 

Let $d$ be even. The homology long exact sequence of the cofibration (\ref{cofibrationspecialized})  gives
$$0 \to \Z \to H_{nd-d}(\exp_n S^d)\to H_{nd-d}(\expb_n S^d)\to 0$$
since $H_{nd-d+1}(\expb_n S^d)=H^{d-1}(S_n,\Z)$ is torsion. By Proposition~\ref{hopefullylast}, the group $H_{nd-d}(\expb_n S^d)$ is an extension of $H^d(C_n,\Z)$ by $\Z/(n-1)\Z$ and a generator of $\Z/(n-1)\Z$ may be represented by the map $$\expb_{n-1} S^d\to \expb_n S^d.$$
Let $\gamma\in H_{nd-d}(\exp_n S^d)$ be the class of $\exp_{n-1}S^d$ minus 2 times the class of $\exp_n(S^d,*)$. On one hand, $\gamma$ projects to the generator of $\Z/(n-1)\Z$ in $H_{nd-d}(\expb_n S^d)$; on the other hand, it has order $n-1$ by
Proposition~\ref{alphabeta}. It follows that $H_{nd-d}(\exp_n S^d)$ is an extension of $H^d(C_n,\Z)$
by $\Z\oplus\Z/(n-1)\Z$.

If $d$ is odd,  $H_{nd-d}(\exp_n(S^d,*))$ vanishes and $H_{nd-d-1}(\exp_n(S^d,*))=H^1(S_{n-1},\sgn)$
which is readily seen to be $\Z/2\Z$. 
Then, the homology long exact sequence of  (\ref{cofibrationspecialized}) gives
$$0 \to  H_{nd-d}(\exp_n S^d)\to H_{nd-d}(\expb_n S^d)\to \Z/2\Z$$
 and Theorem~\ref{twoa} follows from Proposition~\ref{hopefullylast}.


\end{document}